\documentclass[11pt]{article}

\usepackage{epsfig}
\usepackage{latexsym}
\usepackage{amsmath}
\usepackage{amsfonts}
\usepackage{amssymb}
\usepackage{graphicx}%
\usepackage{algorithmic, algorithm}

\floatstyle{ruled}
\newfloat{algorithm}{tbp}{loa}

\floatname{algorithm}{\protect\algorithmname}
\providecommand{\algorithmname}{Algorithm}

\setlength{\textheight}{8.5in} \setlength{\textwidth}{6.2in}
\setlength{\oddsidemargin}{0in}

\makeatletter

\newcommand{\Rmnum}[1]{\expandafter\@slowromancap\romannumeral #1@}
\makeatother

\makeatletter
\let\@fnsymbol\@arabic
\makeatother

\usepackage{accents,graphicx}

\makeatletter
\DeclareRobustCommand{\cev}[1]{%
  \mathpalette\do@cev{#1}%
}
\newcommand{\do@cev}[2]{%
  \fix@cev{#1}{+}%
  \reflectbox{$\m@th#1\vec{\reflectbox{$\fix@cev{#1}{-}\m@th#1#2\fix@cev{#1}{+}$}}$}%
  \fix@cev{#1}{-}%
}
\newcommand{\fix@cev}[2]{%
  \ifx#1\displaystyle
    \mkern#23mu
  \else
    \ifx#1\textstyle
      \mkern#23mu
    \else
      \ifx#1\scriptstyle
        \mkern#22mu
      \else
        \mkern#22mu
      \fi
    \fi
  \fi
}

\begin{document}

\pagestyle{myheadings} \markright{\sc On coloring numbers of graph powers \hfill} \thispagestyle{empty}

\newtheorem{theorem}{Theorem}[section]
\newtheorem{corollary}[theorem]{Corollary}
\newtheorem{definition}[theorem]{Definition}
\newtheorem{guess}[theorem]{Conjecture}
\newtheorem{claim}[theorem]{Claim}
\newtheorem{problem}[theorem]{Problem}
\newtheorem{question}[theorem]{Question}
\newtheorem{lemma}[theorem]{Lemma}
\newtheorem{proposition}[theorem]{Proposition}
\newtheorem{fact}[theorem]{Fact}
\newtheorem{acknowledgement}[theorem]{Acknowledgement}
\newtheorem{axiom}[theorem]{Axiom}
\newtheorem{case}[theorem]{Case}
\newtheorem{conclusion}[theorem]{Conclusion}
\newtheorem{condition}[theorem]{Condition}
\newtheorem{conjecture}[theorem]{Conjecture}
\newtheorem{criterion}[theorem]{Criterion}
\newtheorem{example}[theorem]{Example}
\newtheorem{exercise}[theorem]{Exercise}
\newtheorem{notation}[theorem]{Notation}
\newtheorem{observation}[theorem]{Observation}
\newtheorem{solution}[theorem]{Solution}
\newtheorem{summary}[theorem]{Summary}

\newtheorem{thm}[theorem]{Theorem} 
\newtheorem{prop}[theorem]{Proposition} 
\newtheorem{defn}[theorem]{Definition}

\newenvironment{proof}{\noindent {\bf
Proof.}}{\rule{3mm}{3mm}\par\medskip}
\newcommand{\remark}{\medskip\par\noindent {\bf Remark.~~}}
\newcommand{\pp}{{\it p.}}
\newcommand{\de}{\em}
\newcommand{\g}{\mathrm{g}}

\newcommand{\qf}{Q({\cal F},s)}
\newcommand{\qff}{Q({\cal F}',s)}
\newcommand{\qfff}{Q({\cal F}'',s)}
\newcommand{\f}{{\cal F}}
\newcommand{\ff}{{\cal F}'}
\newcommand{\fff}{{\cal F}''}
\newcommand{\fs}{{\cal F},s}
\newcommand{\cs}{\chi'_s(G)}

\newcommand{\G}{\Gamma}
\newcommand{\wrt}{with respect to }
\newcommand{\mad}{{\rm mad}}
\newcommand{\col}{{\rm col}}
\newcommand{\gcol}{{\rm gcol}}

%%%%%%%%%%%%%%%%%%%%%%%%%%%%%% User specified LaTeX commands.
%\usepackage{fullpage}
\newcommand*{\ch}{{\rm ch}}
\newcommand*{\ra}{{\rm ran}}
\newcommand{\co}{{\rm col}}
\newcommand{\sco}{{\rm scol}}
\newcommand{\wc}{{\rm wcol}}
\newcommand{\dc}{{\rm dcol}}
\newcommand*{\ar}{{\rm arb}}
\newcommand*{\ma}{{\rm mad}}
\newcommand{\di}{{\rm dist}}
\newcommand{\tw}{{\rm tw}}
\newcommand{\scol}{{\rm scol}}
\newcommand{\wcol}{{\rm wcol}}
\newcommand{\td}{{\rm td}}
\newcommand{\edp}[2]{#1^{[\natural #2]}}
\newcommand{\epp}[2]{#1^{\natural #2}}
\newcommand*{\ind}{{\rm ind}}

\newcommand*{\QEDA}{\ensuremath{\blacksquare}} 
\newcommand*{\QEDB}{\hfill\ensuremath{\square}}  
\title{On coloring numbers of graph powers} 

\renewcommand{\thefootnote}{\arabic{footnote}} 
\author{
	H. A. Kierstead \footnotemark[1]$~^,$\footnotemark[4]~~~~
	Daqing Yang \footnotemark[2]$~^,$\footnotemark[5]~~~~
	Junjun Yi \footnotemark[3]$~^,$\footnotemark[6] 
}
\footnotetext[1]{School of Mathematical and Statistical Sciences, Arizona State University, Tempe, AZ 85287, USA}
\footnotetext[2]{Department of Mathematics,
	Zhejiang Normal University, Jinhua, Zhejiang 321004, China}
\footnotetext[3]{Center for Discrete Mathematics,
	Fuzhou University, Fuzhou, Fujian 350108, China.} 
\footnotetext[4]{E-mail: \small\texttt{kierstead@asu.edu}.}
\footnotetext[5]{Corresponding author,  grant numbers: 
	NSFC  11871439, 11471076. E-mail: \small\texttt{dyang@zjnu.edu.cn}.} 
\footnotetext[6]{E-mail: \small\texttt{1510163305@qq.com}.}

\maketitle

\begin{abstract}
	The weak $r$-coloring numbers  $\wc_r(G)$ 
	of a graph $G$ were introduced by the first two authors as a generalization of the usual coloring number $\co(G)$, and have since  found interesting theoretical and algorithmic applications. 
	This has motivated researchers to establish strong bounds on these parameters for various classes of graphs.

	Let $G^p$ denote the $p$-th power of $G$. We show that, all  integers $p >0$ and $\Delta \ge 3$ and graphs
	$G$ with $\Delta(G)\leq\Delta$ satisfy $\co(G^p) \in  O(p \cdot \wc_{\lceil p/2\rceil}(G)(\Delta-1)^{\lfloor p/2\rfloor})$; for fixed  tree width or fixed genus
	the ratio between this  upper bound and worst case lower bounds is polynomial in $p$.
	For the square of graphs $G$, we also show that, if the maximum average degree  $2k-2 < \mad(G) \leq 2k$,   %%%DY modified $2k-2 < \mad(G)$. 
	then $ \col(G^2) \leq (2k-1)\Delta(G)+2k+1$.
\end{abstract}

Keywords: graph power, square of graphs,  coloring number, weak 
 coloring number,  maximum average degree, Harmonious Strategy.

\section{Introduction}

Let $G=(V,E)$ be a graph. For two vertices $x$ and $y$ in the same
component of $G$, the \emph{distance} $\di_{G}(x,y)$ between $x$
and $y$ is the length of a shortest $x,y$-path in $G$.
The \emph{$k$-th open neighborhood} $N_{G}^k(v)$ and \emph{$k$-th closed neighborhood} $N_{G}^k[v]$ of a vertex  $v\in V$ are defined by
$$N_{G}^k(v)=\{w\in V: \di_{G}(v,w)=k\}~\textrm{and}~N_{G}^k[v]=\{w\in V: \di_{G}(v,w)\le k\}.$$
As usual, we set $N_{G}(v)=N_{G}^{1}(v)$, $N_{G}[v]=N_{G}^{1}[v]$ and $d_{G}(v)=|N_{G}(v)|$.
Finally, we drop the subscripts $G$ in the above notations when $G$
is clear from the context.

The $p$-th \emph{power} of
$G$ is the graph $G^{p}=(V,E^{p})$, where $E^{p}=\{xy:1\leq\di_{G}(x,y)\leq p\}$.
Then $N_{G^{p}}(x)=N_{G}^{p}[x]-x$. 
Here we are concerned with the problem of bounding the chromatic number
and the list chromatic number of the $p$-th powers 
of graphs from various classes, particularly for fixed maximum degree $\Delta$ and arbitrary  $p$.  
Although more general, our results
improve on the known bounds for the chromatic number of graph powers of graphs excluding some fixed minor.

\subsection{Generalized coloring numbers}

For a graph $G=(V,E)$, let $\Pi:=\Pi(G)$ be the set of total
orderings of the vertex set $V$. For $\sigma\in\Pi$ and $x\in V$,
set
\begin{enumerate}
	\item $V_{\sigma}^{l}(x)=\{y\in V: y<_{\sigma}x\}$, $V_{\sigma}^{l}[x]=V_{\sigma}^{l}(x)+x$;
	and
	\item $V_{\sigma}^{r}(x)=\{y\in V: x<_{\sigma}y \}$, $V_{\sigma}^{r}[x]=V_{\sigma}^{r}(x)+x$.
\end{enumerate}
Thus $\{V_{\sigma}^{l}(x),\{x\},V_{\sigma}^{r}(x)\}$ partitions $V$
into the \emph{left set} of $x$, singleton $x$, and the \emph{right
	set} of $x$. The coloring number of $G$, denoted $\co(G)$, is defined
by
\[
\co(G)=\min_{\sigma\in\Pi}\max_{x\in V}|N[x]\cap V_{\sigma}^{l}[x]|.
\]
Greedily coloring the vertices of $G$ in an order
that witnesses its coloring number shows that
\[
\chi(G)\leq\chi_{l}(G)\leq\co(G),
\]
where $\chi(G)$ is the chromatic number of $G$, and $\chi_{l}(G)$ is the  list
chromatic number of $G$.

\emph{Generalized coloring numbers} were first introduced by Kierstead and Yang in \cite{KY} after 
similar notions were explored by various authors \cite{MR1198403,K,KT,KT-OGC}
in the cases $k=2,4$. Let $k\in\mathbb{Z}^{+}\cup\{\infty\}$. A
vertex $y$ is \emph{weakly k-reachable} from $x$ with respect to
$\sigma$ if $y\in V_{\sigma}^{l}[x]$ and there is an $x,y$-path
$P$ with $\|P\|\le k$ and $V(P)\subseteq V_{\sigma}^{r}[y]$.
Let $W_{\sigma}^{k}[x]$ be the set of vertices that are weakly $k$-reachable
from $x$ with respect to $\sigma$. %HK: added period 
The \emph{weak $k$-coloring number},  denoted $\wc_{k}(G)$, of $G$ 
is defined by:
\[
\wc_{k}(G)=\min_{\sigma\in\Pi}\max_{x\in V}|W_{\sigma}^{k}[x]|. 
\]
Observe that $\co(G)=\wc_{1}(G)$. 

The weak coloring numbers have found many important and diverse applications (cf. \cite{D-2013,GKR,HKQ}). 
As shown by Ne{\v s}et{\v r}il  and Ossona de Mendez \cite[Lemma 6.5]{NO-Sparsity}, %HK:  
they also provide a gradation
between the coloring number and the tree-depth $\td(G)$ of a graph  $G$ 
as follows:
$$\co(G)=\wc_1(G)\le\wc_2(G)\le\dots\le\wc_{\infty}(G)=\td(G).$$

Graph classes with \emph{bounded
	expansion} (a notion extending graph classes excluding a minor
or topological minor)
 were first introduced by Ne{\v s}et{\v r}il and Ossona de Mendez \cite{MR2397335,NO-Sparsity}.
Zhu \cite{Zhu-BGCN} (also see \cite{MR2519201}) characterized graph classes with {bounded
	expansion} as those classes $\mathcal{C}$ for which there is a function
$f:\mathbb{Z}^{+}\rightarrow\mathbb{Z}^{+}$ such that all graphs
$G\in\mathcal{C}$ and all integers $k\in\mathbb{N}$ satisfy $\wcol_{k}(G)\leq f(k)$. 

The following theorem gives upper bounds on the 
weak $k$-coloring numbers for various graph classes. Items (2--4) below are essentially due to \cite{HOQRS}, where all are proved using the same technique; here by using an observation in \cite{HKQ},  their results are improved a bit by adding the last negative term. Item 5 
is  Proposition 28  of \cite{HW}.

\begin{thm} \label{wc-g-p}
	All positive integers $k$ and graphs $G$ satisfy:
	\begin{enumerate}
		\item  \cite{GKRSS} ~ $\wc_{k}(G)\leq\binom{k+t}{t}$, if $\tw(G)\leq t\leq\Delta+1$, where  $\tw(G)$ is the tree-width of $G$,  and this is sharp;
		\item \cite{HKQ,HOQRS} $\wc_{k}(G)\leq\binom{k+t-2}{t-2}(t-3)(2k+1)-k(t-3)$, if $t\geq4$
		and $G$ has no $K_{t}$ minor;
		\item \cite{HKQ,HOQRS} $\wc_{k}(G)\leq(2g+\binom{k+2}{2})(2k+1)-k$, if $G$  has genus $g$;
		\item \cite{HKQ,HOQRS}  $\wc_{k}(G)\leq\binom{k+2}{2}(2k+1)-k$, if $G$ is planar; %HK ;
		
		\item  \cite{HW} 	$\wc_{k}(G) \leq s(t-1)\binom{k+s}{s}(2k+1)$, if $G$ is  $K^{\ast}_{s,t}$-minor-free, where $K^{\ast}_{s,t}$ is the complete join of $K_s$ and $\overline{K_t}$.  
		%%%DY, fix
	\end{enumerate}
\end{thm}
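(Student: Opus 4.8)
The plan is to dispose of items (1) and (5) by citing \cite{GKRSS} and \cite{HW} verbatim, and to prove items (2)--(4) by running the layering argument of \cite{HOQRS} and then inserting the refinement of \cite{HKQ}. Recall that the HOQRS method produces a single order $\sigma\in\Pi(G)$ from iterated breadth-first-search layerings and bounds $|W^k_\sigma[x]|$ by a product of ``per-level'' contributions. First I would fix, in each component of $G$, a BFS layering $L_0,L_1,\dots$ and record the basic observation that if $y\in W^k_\sigma[x]$ is witnessed by a path $P$ of length at most $k$ with $V(P)\subseteq V^r_\sigma[y]$, then along $P$ the BFS-layer index can drop by at most $k$, so $y$ lies in one of at most $k+1$ consecutive layers below the layer of $x$. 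This controls \emph{where} weakly $k$-reachable vertices sit but not \emph{how many} there are, so one recurses: inside each layer $L_i$ one takes a fresh BFS layering of $G[L_i]$, orders accordingly, and iterates. The structural engine for item (2) is that, for a $K_t$-minor-free $G$, each $G[L_i]$ is $K_{t-1}$-minor-free---otherwise contracting the nonempty layers below $L_i$ to one vertex joined to all of $L_i$ would yield a $K_t$-minor---so after $t-3$ rounds one reaches forests, for which the weakly $k$-reachable sets have size at most $k+1$.

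Next I would carry out the count. Over the $t-2$ layering levels (the original one plus $t-3$ recursive ones) a weakly $k$-reachable vertex is determined by, at each level, which of at most $k+1$ nearby layers it uses, with the chosen ``depths'' summing to at most $k$; the number of such signatures is $\binom{k+t-2}{t-2}$. The local structure of the torso formed by two consecutive layers contributes a factor $2k+1$ (from the path-like order such a torso carries, along which weak reachability through a length-$\le k$ walk reaches at most $2k+1$ vertices), and the minor-complexity of these torsos across the recursion contributes the remaining factor $t-3$; multiplying gives the HOQRS bound $\binom{k+t-2}{t-2}(t-3)(2k+1)$. For item (3) I would replace this structural input by the fact that, in a BFS layering of a genus-$g$ graph, the deviation from planar behaviour is confined to at most $2g$ additional vertices, so the count becomes $\bigl(2g+\binom{k+2}{2}\bigr)(2k+1)$, and item (4) is the case $g=0$. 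Finally, the HKQ observation is that the product above over-counts: the contribution in which the reachable vertex never leaves the deepest-level layer (equivalently, is reached through a forest) is bounded not by the crude factor $2k+1$ but by $k+1$, so one may subtract $k$ from the $(2k+1)$-factor in each place where this case occurs---once in the genus and planar settings, and $t-3$ times in the $K_t$-minor-free setting---yielding the extra terms $-k$ and $-k(t-3)$ respectively.

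I expect the main obstacle to be the bookkeeping: one must arrange the recursive layerings, the definition of the consecutive-layer torsos, and the forest base case so that the three factors $\binom{k+t-2}{t-2}$, $2k+1$, $t-3$ (respectively $2g+\binom{k+2}{2}$ and $2k+1$) come out exactly as stated, and then re-run the count tracking the over-counted ``all-deep'' signature precisely enough to save exactly $k$ per relevant round---no more, no less. A secondary nuisance is doing everything uniformly over disconnected graphs and over all choices of BFS roots at every recursion level, so that the single order $\sigma$ simultaneously witnesses the bound for every vertex $x$.
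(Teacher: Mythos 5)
This theorem is not proved in the paper at all: it is a list of quoted results, and the paper's entire ``proof'' is the citations to \cite{GKRSS}, \cite{HOQRS,HKQ} and \cite{HW}. Your plan to cite items (1) and (5) therefore matches the paper; but your sketch for items (2)--(4) is not the argument of the cited papers, and as written it has a genuine gap. The central problem is the recursion on single BFS layers: a path witnessing $y\in W^{k}_{\sigma}[x]$ only has to avoid vertices $\sigma$-smaller than $y$, so with layers ordered from the root outward it may dive into deeper layers and return; hence for $x,y$ in the same layer $L_i$ the witness is in general not a path of $G[L_i]$, and weak reachability computed inside $G[L_i]$ with a fresh layering does not capture such $y$. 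The standard repair (recursing on blocks of up to $k+1$ consecutive layers, which in minor-closed classes only have treewidth $O_t(k)$) yields bounds exponential in $k$, far weaker than the stated $O(k^{t-2}\cdot k)$-type bounds. Consequently the factors you assert do not ``come out'': in \cite{HOQRS} the factor $2k+1$ is the number of vertices within distance $k$ of a given vertex on a \emph{geodesic} (a shortest path of the ambient graph), the factor $t-3$ is the number of geodesics per part in a structural partition of $K_t$-minor-free graphs whose quotient has treewidth at most $t-2$ (whence $\binom{k+t-2}{t-2}$, via item (1)), and for genus $g$ one deletes at most $2g$ geodesics --- not $2g$ vertices, which would add $2g$ rather than $2g(2k+1)$ --- to reduce to the planar case. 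Your ``torso of two consecutive layers carries a path-like order'' is exactly the point that would need proof, and it is where the whole difficulty sits.

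The refinement terms are likewise not established by your sketch: the $-k$ and $-k(t-3)$ arise in \cite{HKQ} from a sharper count along the geodesics inside the \cite{HOQRS} decomposition (part of the $2k+1$ window on the relevant geodesics cannot be weakly reachable), not from a ``forest base case'' of a layering recursion; attaching a saving of exactly $k$ per round to a count that was never derived does not yield the stated formulas. If you want a self-contained proof of (2)--(4) you need the geodesic-partition machinery of \cite{HOQRS} (or an equivalent), together with the \cite{HKQ} refinement carried out in that framework; otherwise the correct treatment here is simply to cite all five items, as the paper does.
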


\subsection{Parameters for measuring density}

The coloring number is closely related to various parameters for measuring
the local density of a graph.  
The \emph{arboricity} of a graph $G$, denoted $\ar(G)$, is the minimum
number of forests required to cover the edges of $G$. By %the
Nash-Williams' Theorem \cite{N-W}, 
$\ar(G)=\max_{H\subseteq G,|H|\geq2}\left\lceil \frac{\left\Vert H\right\Vert }{|H|-1}\right\rceil$.
The \emph{maximum average degree }of $G$ is
$\ma(G)=\max_{\emptyset\ne H\subseteq G}\frac{2\left\Vert H\right\Vert }{|H|}$. 
The following proposition is well known and easy to prove.

\begin{prop} Every graph $G$ satisfies
	$$\chi(G)\leq\chi_{l}(G)\leq\co(G)  
	\le \lfloor\ma(G)\rfloor+1\leq2\ar(G). $$ %\le2\co(G).$$
\end{prop}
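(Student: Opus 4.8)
The plan is to establish the chain one link at a time; all four inequalities are classical and only the third carries any real content. The first inequality $\chi(G)\le\chi_l(G)$ is immediate: a proper coloring is exactly a list coloring in which every vertex is assigned the same list, so the list requirement is at least as stringent. The second inequality $\chi_l(G)\le\co(G)$ is the observation already made in the introduction: fix $\sigma\in\Pi$ witnessing $\co(G)$ and color greedily along $\sigma$; when a vertex $x$ is reached, the only already-colored vertices that can forbid a color for $x$ lie in $N[x]\cap V_\sigma^l(x)$, and there are at most $\co(G)-1$ of them, so any list of size $\co(G)$ leaves a usable color.

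For the main inequality $\co(G)\le\lfloor\mad(G)\rfloor+1$ I would use the smallest-last ordering. Since $\mad$ does not increase under taking subgraphs, every nonempty $H\subseteq G$ has a vertex of degree at most the average degree of $H$, hence at most $\lfloor\mad(G)\rfloor$ (taking the floor is legitimate because vertex degrees are integers). Build $\sigma=(v_1,\dots,v_n)$ from right to left: let $v_n$ be a minimum-degree vertex of $G$, and having chosen $v_{i+1},\dots,v_n$, let $v_i$ be a minimum-degree vertex of $G_i:=G[\{v_1,\dots,v_i\}]$. For each $i$, the left-neighbors of $v_i$ under $\sigma$ are exactly its neighbors in $G_i$, so
\[
\bigl|N[v_i]\cap V_\sigma^l[v_i]\bigr| \;=\; d_{G_i}(v_i)+1 \;\le\; \lfloor\mad(G_i)\rfloor+1 \;\le\; \lfloor\mad(G)\rfloor+1 .
\]
Maximizing over $i$ yields $\co(G)\le\lfloor\mad(G)\rfloor+1$.

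Finally, for $\lfloor\mad(G)\rfloor+1\le 2\ar(G)$ I would first dispose of the case that $G$ has no edge, where $\chi(G)=\chi_l(G)=\co(G)=\lfloor\mad(G)\rfloor+1=1$ and the last bound is merely a matter of the usual convention $\ar\ge 1$. So assume $G$ has an edge and pick $H\subseteq G$ attaining $\mad(G)=\tfrac{2\|H\|}{|H|}$; then $\|H\|\ge 1$ and $|H|\ge 2$, since this maximum is now positive. By the Nash--Williams formula,
\[
\mad(G)=\frac{2\|H\|}{|H|}<\frac{2\|H\|}{|H|-1}\le 2\left\lceil\frac{\|H\|}{|H|-1}\right\rceil\le 2\ar(G),
\]
and since $2\ar(G)$ is an integer, $\mad(G)<2\ar(G)$ forces $\lfloor\mad(G)\rfloor\le 2\ar(G)-1$, i.e.\ $\lfloor\mad(G)\rfloor+1\le 2\ar(G)$. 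The only place needing a moment's care is this last step — the strictness of the first inequality (which uses $\|H\|>0$) together with the integrality of $2\ar(G)$ and the trivial edge-case convention; everything else is routine.
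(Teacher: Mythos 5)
Your chain is correct and is exactly the standard argument---greedy coloring along a witnessing order for $\chi_l(G)\le\co(G)$, a smallest-last ordering (using that every subgraph has a vertex of degree at most $\lfloor\mad(G)\rfloor$) for $\co(G)\le\lfloor\mad(G)\rfloor+1$, and the Nash--Williams formula with the strict inequality coming from the denominator $|H|-1$ versus $|H|$ for $\lfloor\mad(G)\rfloor+1\le 2\ar(G)$; the paper states the proposition without proof as well known, so there is nothing to diverge from. Your explicit treatment of the edgeless case (where the last inequality needs a convention on $\ar$) is a small point of extra care beyond what the paper records, and the rest of the reasoning, including the use of integrality of $2\ar(G)$ to pass from $\mad(G)<2\ar(G)$ to the floor inequality, is sound.
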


For a graph $G=(V,E)$, 
let $\vec{E}=\{\vec{e},\cev e:e\in E\}$ be the set of orientations %HK13
of its edges. Define a \emph{weak orientation} of $G$ to be a function $w:\vec{E}\rightarrow\mathbb{R}$
such that $w(\vec{uv})+w(\cev{uv})=1$ and $w(\vec{uv}),w(\cev{uv})\geq0$ for all
$uv\in E$. We say that $G$ is weakly oriented if it has been assigned a weak orientation. Observe that 
 ordinary (unoriented) graphs can be interpreted as weakly oriented graphs whose edges have weight $1/2$ in both directions, and  oriented graphs can be interpreted as weakly oriented graphs whose weights are $0,1$-valued.  
 Define the \emph{out-weight} $w^{+}(u)$ of $u$
and the \emph{maximum out-weight} $\Delta_{w}^{+}(G)$ of $G$ by 
\[
w_{G}^{+}(u):=w^{+}(u):=\sum_{uv\in E}w(\vec{{uv}})\quad\text{and}\quad\Delta_{w}^{+}(G):=\max_{u\in V}w_{G}^{+}(u).
\]
Using standard notation, let $\Delta^{+}(\vec G)$ denote the maximum outdegree of an oriented graph $\vec G$. 

\begin{prop}\label{fact}
Every graph $G$ satisfies both:
\begin{enumerate}
\item \label{F1} $2\min_{w}\Delta_{w}^{+}(G)=\mad(G)$, where $w$ runs over all weak orientations of $G$ and 
\item \label{F2} (cf. Hakimi \cite{hak}) $2\min_{\vec G} \Delta^{+}(\vec G)=\lceil\operatorname{mad}(G)\rceil$.
\end{enumerate} 
\end{prop}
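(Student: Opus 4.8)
The plan is to prove both equalities by playing a single averaging lower bound against an upper bound obtained from a feasibility theorem --- a max-flow/min-cut (equivalently Hall-type) argument for~\eqref{F1}, and Hakimi's theorem for~\eqref{F2}. Write $\rho(G):=\max_{\emptyset\ne H\subseteq G}\|H\|/|H|$, so that $\rho(G)=\tfrac12\mad(G)$ directly from the definition of $\mad$. For the ``$\ge$'' directions, fix a weak orientation $w$ of $G$ (for~\eqref{F2}, an orientation $\vec G$, regarded as a $0,1$-valued weak orientation) and pick $H\subseteq G$ with $\|H\|/|H|=\rho(G)$. Since $E(H)\subseteq E(G)$ and all weights are nonnegative, $w_G^+(v)\ge\sum_{vu\in E(H)}w(\vec{vu})$ for each $v\in V(H)$, and summing over $v\in V(H)$ every edge $vu\in E(H)$ is counted with total weight $w(\vec{vu})+w(\cev{vu})=1$; hence $\sum_{v\in V(H)}w_G^+(v)\ge\|H\|$ and some vertex of $V(H)$ has out-weight at least $\|H\|/|H|=\rho(G)$. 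Thus $2\Delta_w^+(G)\ge\mad(G)$, and in the orientation case, where out-degrees are integers, this sharpens to $\Delta^+(\vec G)\ge\lceil\rho(G)\rceil$.

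For the ``$\le$'' direction of~\eqref{F1} I would produce a weak orientation with maximum out-weight exactly $\rho(G)$ via a flow network: a source $s$, a sink $t$, one node for each $e\in E$ and each $v\in V$, arcs $s\to e$ of capacity $1$, arcs $e\to u$ and $e\to v$ of capacity $\infty$ for every $e=uv\in E$, and arcs $v\to t$ of capacity $\rho(G)$. A flow saturating all arcs $s\to e$ splits the unit at $e=uv$ into amounts $x_{e,u},x_{e,v}\ge0$ with $x_{e,u}+x_{e,v}=1$, and $w(\vec{uv}):=x_{e,u}$ then defines a weak orientation with $w_G^+(u)=\sum_{e\ni u}x_{e,u}\le\rho(G)$ for all $u$; conversely every such $w$ gives such a flow. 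By max-flow/min-cut it suffices to check that every finite $s$--$t$ cut has capacity at least $|E|$. A finite cut is determined by the set $F\subseteq E$ of edge-nodes left on the source side, and it must leave all endpoints $V(F)$ of $F$ on the source side too (otherwise an infinite arc is cut), so its capacity equals $|E\setminus F|+\rho(G)\,|V(F)|$, which is $\ge|E|$ precisely when $\rho(G)\ge|F|/|V(F)|$; and this holds because $|F|/|V(F)|$ is the edge-to-vertex ratio of the subgraph with vertex set $V(F)$ and edge set $F$, hence at most $\rho(G)$. So the desired weak orientation exists, completing~\eqref{F1}.

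For the ``$\le$'' direction of~\eqref{F2} I would invoke Hakimi's theorem~\cite{hak}: $G$ admits an orientation with maximum out-degree at most $k$ if and only if $\|H\|\le k|H|$ for every subgraph $H\subseteq G$. Taking $k=\lceil\rho(G)\rceil$ meets this since $\|H\|/|H|\le\rho(G)\le k$, so, together with the lower bound above, $\min_{\vec G}\Delta^+(\vec G)=\lceil\rho(G)\rceil=\lceil\tfrac12\mad(G)\rceil$, which is~\eqref{F2}. (Alternatively, \eqref{F1} can be deduced from~\eqref{F2}: apply~\eqref{F2} to the multigraph formed from $G$ by replacing each edge with $N$ parallel copies, divide the out-degrees of an optimal orientation by $N$, and pass to the limit, using that the set of weak orientations is compact and $w\mapsto\Delta_w^+(G)$ is continuous.) I expect the one genuine obstacle to be the upper bound in~\eqref{F1}: all of its content is the cut computation above, i.e., showing that fractional infeasibility would force a subgraph denser than $\mad(G)$ permits.
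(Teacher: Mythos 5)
Your proposal is correct, but it reaches the nontrivial half of the statement by a genuinely different mechanism than the paper. For the upper bound in item (1) the paper gives a self-contained extremal argument: it restricts to weights that are multiples of $1/n$ (with $n$ the order of a densest subgraph), chooses a weak orientation minimizing the total ``excess'' out-weight above the density bound, and, if some vertex is overloaded, pushes $1/n$ of weight along a path of positive-weight forward edges to a vertex of small out-weight, contradicting minimality; item (2) is then obtained by rerunning the same argument with integer weights ($n=1$), i.e.\ by reproving Hakimi's theorem rather than quoting it. You instead encode the fractional orientation problem as a flow network and invoke max-flow/min-cut, where the cut calculation reduces exactly to the density inequality $|F|/|V(F)|\le\tfrac12\mad(G)$, and for item (2) you cite Hakimi as a black box. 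Both routes are sound; yours is shorter if one grants LP duality/MFMC, while the paper's is elementary and self-contained (its augmenting-path step is of course the combinatorial core of the same duality), and your averaging lower bounds coincide with the paper's. One remark on item (2): what you actually prove is $\min_{\vec G}\Delta^{+}(\vec G)=\lceil\mad(G)/2\rceil$, which is also what the paper's own proof establishes (it sets $m=\max_{H}\lceil\|H\|/|H|\rceil$) and is what is used later in the paper; this is not literally the displayed identity $2\min_{\vec G}\Delta^{+}(\vec G)=\lceil\mad(G)\rceil$ when $\lceil\mad(G)\rceil$ is odd (e.g.\ $K_4$), so that small discrepancy lies in the Proposition's statement, not in your argument. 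Your parenthetical derivation of item (1) from item (2) via $N$-fold parallel edges and a compactness limit is also fine, provided one notes that Hakimi's theorem is being applied to multigraphs.
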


\begin{proof}
First we prove item \ref{F1}. 
For any subgraph $H\subseteq G$, and weak orientation $w$, 
\[
\left\Vert H\right\Vert =\sum_{e\in E(H)}(w(\vec{e})+w(\cev e))=\sum_{v\in V(H)}w_{H}^{+}(v)\leq |H|\cdot\Delta_{w}^{+}(H) \leq |H|\cdot\Delta_{w}^{+}(G).
\]
Setting $m=\mad(G)=\max_{H\subseteq G}\left\Vert H\right\Vert /|H|$, we have $m\le 2\Delta_{w}^{+}(G)$. 

Now we find a weak orientation $w$ with $2\Delta_{w}^{+}(G)\leq m$. 
Fix $H$ witnessing $m$, 
  and let  $n=|H|$.
Pick $w:\vec{E}\rightarrow\{k/n:k=0,\dots,n\}$ %HK10
so that  the
 \emph{excess weight} $$b(w):=\sum\{2w^{+}(x)-m:x\in V \text{ and }2w^{+}(x)>m\}$$ is
minimum. This is possible since there are only $(n+1)^{\left\Vert G\right\Vert }$
choices for $w$. It suffices to show $b(w)=0$. Suppose not. Then there is a vertex
$x$ with $2w^{+}(x)>m$. By the choice of $w$, $2w^{+}(x)-m\geq1/n$. 
%%DY11, changed v to x
Let $S$
be the set of vertices $v\in V$ for which there is a path $P_{v}=v_{0}\dots v_{s}$
with $x=v_{0}$ and $v=v_{s}$ such that every forward oriented edge has positive
weight (and so weight at least $1/n$). Set $H'=G[S]$. 
If $v\in S$ and $w\in N(v)\smallsetminus S$
then $w(\vec{vw})=0$, so $w_{H'}^{+}(v)=w_{G}^{+}(v)$. Since $2\left\Vert H'\right\Vert /|H'|\leq m$,
there is $v\in S$ with $2w_{G}^{+}(v)=2w_{H'}^{+}(v)<m$, and so $2w^{+}(v)\leq m-1/n$.
 Define a new weak orientation $w'$ by decreasing (increasing) the weight of each
forward (backward) edge of $P_{v}$ by $1/n$. Now $b(w')<b(w)$, a contradiction.

For the proof of item \ref{F2}, replace ``weak orientation'' with ``$0,1$-orientation'', set $m=\lceil \mad(G) \rceil= \max_{H\subseteq G}\lceil\left\Vert H\right\Vert /|H|\rceil $, and set  $n=1$ in the proof of item \ref{F1}.  
\end{proof}

%%%%%%%%%%%%%%%%%%%%%%%%%

In Section 2, for general $p$, we study the coloring number of the $p$-th power of graphs $G$.
 We show that, all positive integers $p$ and $\Delta$ and graphs
$G$ with $\Delta(G)\leq\Delta$ satisfy $\co(G^p)$ 
$\le O(p \cdot \wc_{\lceil p/2\rceil}(G)(\Delta-1)^{\lfloor p/2\rfloor})$; 
for fixed  tree width or fixed genus
the ratio between this  upper bound and worst case lower bounds is polynomial in $p$.
In Section 3, we study the coloring number of the square of graphs $G$; 
we show that, if the maximum average degree  $2k-2 < \mad(G) \leq 2k$,    
then $ \col(G^2) \leq (2k-1)\Delta(G)+2k+1$.

%%%%%%%%%%%%%%%%%%%%%%%%%%%%%%%%%%%%

\section{Coloring numbers of  graph powers}

\subsection{Previous results}

If $G$ is a connected graph with diameter at most $p$ then $G^{p}=K_{|G|}$.
As observed in \cite{AH}, if $T$ is a maximum tree of height 
$\lfloor p/2\rfloor$  and $\Delta(T)\leq\Delta$ then
\begin{equation} \label{LB}
\chi(T^{p})\geq1+\Delta\sum_{i=0}^{\lfloor p/2\rfloor-1}(\Delta-1)^{i}=1+\Delta\frac{(\Delta-1)^{\lfloor p/2\rfloor}-1}{\Delta-2}= 
\frac{\Delta(\Delta-1)^{\lfloor p/2\rfloor}-2}{\Delta-2} =: L. 
\end{equation}
For the square of planar graphs,   Agnarsson and Halld{\'o}rsson  \cite{AH}  proved that if $G$ is a planar graph with $\Delta = \Delta(G) \ge 750$, then $\co(G^2) \le \lceil \frac{9}{5}\Delta \rceil$; and this is sharp.
An upper bound on the coloring number of $G^p$ is provided by the following
theorem. %DY
\begin{thm}[Agnarsson and Halld{\'o}rsson \cite{AH}]
	\label{AH-thm} For all $p,\Delta\in \mathbb Z^+$ and graphs
	$G$ with $\Delta(G)\leq\Delta$,
	\[
	\ar(G^{p})\le2^{p+1}(\ar(G))^{\lceil p/2\rceil}\Delta^{\lfloor p/2\rfloor}.
	\]
\end{thm}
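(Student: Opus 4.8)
The plan is to bound $\ar(G^p)$ through the maximum subgraph density, using Nash-Williams' formula $\ar(H)=\max_{H'\subseteq H}\lceil\|H'\|/(|H'|-1)\rceil$. The cleanest way to control the density of \emph{every} subgraph of $G^p$ at once is to exhibit a single orientation of $G^p$ whose maximum out-degree is at most (essentially) the target bound: charging each edge to its tail gives $\|H'\|\le\Delta^{+}(\vec{G^p})\cdot|H'|$ for all $H'\subseteq G^p$, hence $\ar(G^p)\le 2\Delta^{+}(\vec{G^p})$. Building one global orientation sidesteps the main annoyance — that a $G^p$-edge between two vertices of a set $S$ is realized by a $G$-path that may leave $S$ — since the orientation is fixed once and for all.

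First I would fix two pieces of structure on $G$ itself: an orientation with maximum out-degree at most $a:=\ar(G)$ (available by Proposition \ref{fact}(\ref{F2}) together with $\mad(G)\le 2\ar(G)$, which give $\min_{\vec G}\Delta^{+}(\vec G)=\lceil\mad(G)/2\rceil\le a$), and the trivial estimate $|N^{j}[v]|\le 1+\Delta+\dots+\Delta(\Delta-1)^{j-1}\le 2\Delta^{j}$ coming from $\Delta(G)\le\Delta$. The reason for keeping both is that a step along an out-edge costs only a factor $a$, whereas an arbitrary step, needed to sweep out a whole ball, costs a factor $\Delta$.

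The engine I would try first is a splitting recursion. Every edge $\{x,y\}$ of $G^{s+t}$ lies on a shortest $G$-path, on which one can choose a vertex $z$ with $\di(x,z)\le s$ and $\di(z,y)\le t$; thus $\{x,z\}\in E(G^{s})$ and $y\in N^{t}[z]$. Covering $G^{s}$ by $\ar(G^{s})$ forests and blowing up each forest edge $\{x,z\}$ into all edges $\{x,y\}$ with $y\in N^{t}[z]$, one obtains $G^{s+t}\subseteq\bigcup_i\widehat F_i$, where each $\widehat F_i$ carries an orientation of out-degree $\le 2\Delta^{t}$ (orient a blown-up edge out of the forest-child endpoint, or out of the ball member, using that each vertex has a unique parent and a ball of size $\le 2\Delta^{t}$). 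Subadditivity of arboricity over edge-unions then yields the clean recursion
\[
\ar(G^{s+t})\le 2\,\Delta^{t}\,\ar(G^{s}).
\]

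The main obstacle is that iterating this recursion naively (say $t=1$ each time) pays one factor $\Delta$ per unit and produces only the weak bound $\ar(G^{p})\le(2\Delta)^{p-1}a$, with $\Delta$-exponent $p-1$ instead of the required $\lfloor p/2\rfloor$. The improvement to $a^{\lceil p/2\rceil}\Delta^{\lfloor p/2\rfloor}$ must come from using the out-degree-$\le a$ orientation of $G$ on the \emph{longer} half of each shortest path: charge each $G^p$-edge to the endpoint from which the split vertex $z$ is reached along the longer sub-path (length $\le\lceil p/2\rceil$), enumerate those $z$ through out-edges at cost $a^{\lceil p/2\rceil}$, and sweep the shorter sub-path (length $\le\lfloor p/2\rfloor$) as a full ball at cost $\Delta^{\lfloor p/2\rfloor}$; summing the resulting $\sum_{t\le s,\ s+t\le p}a^{s}\Delta^{t}$ and using $a\le\Delta$ rebalances the series to $O\!\left(a^{\lceil p/2\rceil}\Delta^{\lfloor p/2\rfloor}\right)$, with the constant absorbed into $2^{p+1}$. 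The genuinely hard point — where I expect the real work to lie — is that a shortest path need not be monotone with respect to the fixed orientation, so ``reaching $z$ by out-edges along the longer half'' is not literally available; making the charging well defined, with each edge assigned to exactly one endpoint and the two half-costs honestly bounded by $a^{\lceil p/2\rceil}$ and $\Delta^{\lfloor p/2\rfloor}$, is the crux that upgrades the clean recursion above into the stated bound.
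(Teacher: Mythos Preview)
The paper does not give a proof of this statement: Theorem~\ref{AH-thm} is quoted from \cite{AH} in the ``Previous results'' subsection with no argument supplied, so there is nothing in the paper to compare your attempt against.

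On the merits of the sketch itself: your splitting recursion $\ar(G^{s+t})\le C\,\Delta^{t}\,\ar(G^{s})$ is correct and pleasant, but---as you already diagnose---no way of iterating it recovers the exponent pair $(\lceil p/2\rceil,\lfloor p/2\rfloor)$ on $(a,\Delta)$; even the single split $s=\lceil p/2\rceil$, $t=\lfloor p/2\rfloor$ still leaves you needing $\ar(G^{\lceil p/2\rceil})\le O(1)^{p}\,a^{\lceil p/2\rceil}$, which the recursion does not deliver. Your last paragraph names the real obstruction (a shortest path need not be monotone in any fixed bounded-out-degree orientation of $G$, so one cannot ``walk the long half by out-edges at cost $a$ per step'') and then explicitly leaves it unresolved, so what you have is a plan with an acknowledged gap rather than a proof. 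It is worth noting that the paper's own contribution, Theorem~\ref{main-thm}, is engineered precisely to sidestep this obstruction: replacing $\ar(G)$ by $\wc_{\lceil p/2\rceil}(G)$ lets one take the $\sigma$-least vertex $l_{e}$ on the shortest path and reach it \emph{weakly} from a nearby $u_{h}$, a notion that does not demand monotonicity along the path---which is exactly why the new bound improves on Agnarsson--Halld\'orsson for classes where $\wc_{k}$ grows polynomially in $k$.
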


This upper bound was improved for chordal graphs in \cite{Kral-PowChordalG}.
\begin{thm}[  Kr{\'a}l' \cite{Kral-PowChordalG}]
	\label{Kral-thm} For all $p,\Delta\geq2\in\mathbb Z^+$ and
	chordal graphs $G$ with $\Delta(G)\leq\Delta$,
	\[
	\co(G^p)\leq \left\lfloor \sqrt{\frac{91p-118}{384}}(\Delta+1)^{(p+1)/2} \right\rfloor +\Delta + 1.
	\]
\end{thm}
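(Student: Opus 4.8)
\medskip\noindent{\bf Proof idea.} The plan is to turn the subtree (clique--tree) representation of a chordal graph into a single vertex ordering usable for \emph{every} power $G^p$, and then to bound, by a path--splitting argument, how many later vertices of that ordering can lie within $G$--distance $p$ of a fixed vertex.

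First I would fix a rooted clique tree $\mathcal T$ of $G$: its nodes are the maximal cliques of $G$, each of order at most $\omega(G)\le\Delta+1$, and for every $v$ the cliques containing $v$ induce a subtree $\mathcal T_v$ on at most $d_G(v)\le\Delta$ nodes; let $r(v)$ be the node of $\mathcal T_v$ closest to the root. Eliminating $V(G)$ clique by clique from the leaves of $\mathcal T$ toward the root -- by decreasing depth, and within a clique eliminating its private vertices last -- yields a perfect elimination ordering of $G$; take $\sigma$ to be its reverse. Colouring $G^p$ greedily along $\sigma$ gives $\co(G^p)\le 1+\max_v R(v)$, where $R(v)$ counts the vertices $u\ne v$ with $\di_G(u,v)\le p$ that are eliminated no earlier than $v$. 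Apart from the at most $\Delta-1$ such $u$ lying in $r(v)$ itself -- which we absorb into the additive term -- every $u$ counted by $R(v)$ has $r(u)$ no deeper in $\mathcal T$ than $r(v)$, hence is separated from $v$ by some strict ancestor clique of $r(v)$, namely the least common ancestor of $r(u)$ and $r(v)$. So it remains to bound $R(v)$.

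For this I would fix, for each such $u$, a shortest $v$--$u$ path $P$ and cut it at a \emph{pivot} $w$: the vertex where $P$ meets a chosen ancestor clique $C$ of $r(v)$ -- say the one at clique--tree distance about $\lceil p/2\rceil$ above $r(v)$, or the least common ancestor with $r(u)$ when that is lower. Chordality controls both halves. The upward half is rigid: the ancestor cliques of $r(v)$ form a single path in $\mathcal T$, so for a prescribed $G$--distance $a=\di_G(v,w)$ the admissible pivot cliques are essentially determined, and each contributes at most $\Delta+1$ choices of $w$, with only a mild dependence on $a$ entering through $\mathrm{diam}(\mathcal T_y)\le\Delta-1$. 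From a fixed $w$, the vertices $u$ that $P$ can still reach lie in the $G$--ball of radius $p-a$ about $w$, which has at most $1+\Delta\sum_{i=0}^{p-a-1}(\Delta-1)^i$ vertices. Because the pivot forces both $a$ and $p-a$ to be of order $\tfrac12 p$, this bounds $R(v)$ by a sum, over $O(p)$ values of $a$, of products of a small upward factor with a ball of radius about $\tfrac12 p$.

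The last step is to optimise the cut: the sum just described is geometric, hence dominated by its extreme term, and one chooses the cut level to balance the pivots that are ``too high'' against those that are ``too low''. This single convex minimisation is what produces the exponent $(p+1)/2$ and the coefficient $\sqrt{(91p-118)/384}$; adding back the $\le\Delta-1$ neighbours in $r(v)$ and the $+1$ from greedy colouring yields $\co(G^p)\le\lfloor\sqrt{(91p-118)/384}\,(\Delta+1)^{(p+1)/2}\rfloor+\Delta+1$. The crux is the counting in the previous paragraph -- checking that the upward excursion of a shortest path in the clique tree, together with the bounded but non--constant diameters of the subtrees $\mathcal T_y$, never costs more than the claimed factor $(\Delta+1)^{(p+1)/2}$, so that chordality really caps $R(v)$ near this ``square--root'' value rather than the full ball size $\approx(\Delta-1)^{p}$; once that is pinned down, the split optimisation and the precise constants are routine bookkeeping. (Alternatively, one could bound $\ar(G^p)$ as in Theorem~\ref{AH-thm} but replace the lossy factor $\ar(G)^{\lceil p/2\rceil}=\Theta(\Delta^{\lceil p/2\rceil})$ by exploiting that odd powers of chordal graphs are again chordal and so have small clique number; reaching the stated constant still appears to need a version of the counting above.)
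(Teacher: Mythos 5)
You should first note that the paper offers no proof of this statement at all: Theorem~\ref{Kral-thm} is quoted verbatim from Kr\'al' \cite{Kral-PowChordalG}, so your attempt can only be measured against that original argument, not against anything in this paper. Your outline does echo the known strategy (clique tree, ordering vertices by the depth of the topmost clique of their subtree, splitting a shortest $v$--$u$ path at a separator clique roughly halfway up, and balancing the two halves), but as a proof it has a genuine gap exactly where you yourself locate ``the crux.'' The claim that for a prescribed distance $a=\di_G(v,w)$ the admissible pivot cliques are ``essentially determined,'' each contributing at most $\Delta+1$ choices of $w$ with only a ``mild'' dependence on $a$, is unsubstantiated and is not obviously true: since each subtree $\mathcal T_y$ may span up to $\Delta$ nodes of the clique tree, a $G$-path of length $a$ climbing toward the root can surface in ancestor cliques at a wide range of clique-tree depths, so the number of candidate pivots at distance $a$ can grow with both $a$ and $\Delta$ rather than being $O(\Delta)$. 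Executed naively, your count therefore yields something like an extra factor of $\Delta$ (or an exponent $\lceil p/2\rceil+1$), i.e.\ a bound of the same flavor as Theorem~\ref{AH-thm}, not the stated one.

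Relatedly, the final step is not ``routine bookkeeping.'' The stated bound has a half-integer exponent $(p+1)/2$ (for even $p$ this is $p/2+1/2$) and a coefficient $\sqrt{(91p-118)/384}$ growing like $\sqrt p$; a single balanced cut at level $\approx p/2$ of a geometric sum cannot produce either feature -- it gives an integer exponent and a factor linear in $p$ at best. In Kr\'al's proof these come from a genuinely finer analysis (distributing the paths over many possible ``top'' levels and applying a convexity/Cauchy--Schwarz-type optimization over that distribution, with several cases in the clique-tree geometry), and that analysis is precisely what your sketch asserts rather than supplies. So the proposal is a plausible roadmap, but the quantitative heart of the theorem -- controlling the upward excursion through the clique tree tightly enough to reach $(\Delta+1)^{(p+1)/2}$ with only a $\sqrt p$ multiplicative loss -- is missing.
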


\subsection{New result}

In this subsection we improve the known bounds on $\co(G^p)$ for graph
classes, including planar and chordal graphs, whose weak coloring
numbers grow subexponentially.

\begin{thm} \label{main-thm} 
	%All positive integers $p$ and $\Delta$ and graphs
	All  integers $p >0$ and $\Delta \ge 3$ and graphs
	$G=(V,E)$ with $\Delta(G)\leq\Delta$ satisfy
	\[
	\ma(G^{p})
	\leq
	\frac{\Delta}{\Delta-2}2\lfloor\frac{p+1}{2}\rfloor  \wc_{\lceil p/2\rceil}(G)(\Delta-1)^{\lfloor p/2\rfloor}.
	\]
\end{thm}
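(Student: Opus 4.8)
The plan is to invoke Proposition~\ref{fact}(\ref{F1}). Since $\mad(G^{p})=2\min_{w}\Delta_{w}^{+}(G^{p})$ as $w$ ranges over weak orientations, it suffices to build \emph{one} weak orientation $w$ of $G^{p}$ with $\Delta_{w}^{+}(G^{p})\le\frac{\Delta}{\Delta-2}\,q\,\wc_{q}(G)\,(\Delta-1)^{\lfloor p/2\rfloor}$, where $q:=\lceil p/2\rceil=\lfloor(p+1)/2\rfloor$. Write $L:=\frac{\Delta(\Delta-1)^{\lfloor p/2\rfloor}-2}{\Delta-2}$; rooting a breadth-first tree at any vertex $v$ — the same estimate that yields the lower bound~(\ref{LB}) — gives $|N_{G}^{\lfloor p/2\rfloor}[v]|\le L$ for all $v$, and since $L<\frac{\Delta}{\Delta-2}(\Delta-1)^{\lfloor p/2\rfloor}$ it is enough to produce $w$ with $\Delta_{w}^{+}(G^{p})\le q\,\wc_{q}(G)\,L$. (The case $p=1$ is trivial, so assume $p\ge2$.)

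First I would fix an ordering $\sigma$ of $V$ with $|W_{\sigma}^{q}[x]|\le\wc_{q}(G)$ for all $x$, and for each edge $xy\in E(G^{p})$ fix a shortest $G$-path $P^{xy}=(v_{0}=x,\dots,v_{\ell}=y)$, so that $\ell=\di_{G}(x,y)\le p$. The key mechanism is that $\sigma$-minima of short initial segments of shortest paths are weakly $q$-reachable: if $v_{i}=\min_{\sigma}\{v_{0},\dots,v_{j}\}$ with $j\le q$, then the segment $(v_{0},\dots,v_{i})$ certifies $v_{i}\in W_{\sigma}^{q}[x]$, and symmetrically from the $v_{\ell}$-end. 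Thus along $P^{xy}$ one reads off at most $q$ candidate ``pivots'' in $W_{\sigma}^{q}[x]$ (the record minima of the prefixes of length $\le q$) and at most $q$ candidate pivots in $W_{\sigma}^{q}[y]$. I then distribute the unit weight of $xy$ over those candidate pivots $z$ (on either side) for which the opposite endpoint lies in $N_{G}^{\lfloor p/2\rfloor}[z]$, routing weight away from the endpoint owning $z$; the contribution of $xy$ to $\Delta_{w}^{+}$ at each endpoint is then carried by $\le q$ pivot–endpoint pairs $(z,c)$ with $z\in W_{\sigma}^{q}[\,\cdot\,]$ and $c\in N_{G}^{\lfloor p/2\rfloor}[z]$.

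The count is then routine: for a fixed vertex $u$, each unit of out-weight at $u$ is carried by an edge $uc$ that $u$ routed through some $z\in W_{\sigma}^{q}[u]$ with $c\in N_{G}^{\lfloor p/2\rfloor}[z]$; once a canonical pivot is fixed the pair $(z,c)$ determines $uc$, there are at most $\sum_{z\in W_{\sigma}^{q}[u]}|N_{G}^{\lfloor p/2\rfloor}[z]|\le\wc_{q}(G)\,L$ admissible pairs, and spreading a single edge's weight over its (up to $q$) valid pivots — needed to make $w$ well defined at both ends — costs a factor $q$, giving $\Delta_{w}^{+}(G^{p})\le q\,\wc_{q}(G)\,L$; doubling and using $L<\frac{\Delta}{\Delta-2}(\Delta-1)^{\lfloor p/2\rfloor}$ finishes. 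The main obstacle — the step deserving real care — is the middle paragraph: one must show that \emph{every} edge is fully claimed, i.e.\ that some candidate pivot really has the opposite endpoint within distance $\lfloor p/2\rfloor$. This is clear when the global $\sigma$-minimum of $P^{xy}$ sits within $q$ of one endpoint and within $\lfloor p/2\rfloor$ of the other, and the delicate case is when $\di_{G}(x,y)$ is near $p$ (with $p$ odd) and the $\sigma$-small vertices of $P^{xy}$ are clustered at one end, so that no single pivot suffices and one must argue with the whole length-$\le q$ chain of record minima while checking that the shares still sum to $1$; this is exactly where the linear-in-$p$ overhead $\lceil p/2\rceil$ is unavoidable for this method.
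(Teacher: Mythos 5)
Your overall frame matches the paper's: reduce via Proposition~\ref{fact}.\ref{F1} to constructing a weak orientation of $G^{p}$ with small maximum out-weight, fix a shortest $G$-path for each edge of $G^{p}$, use $\sigma$-least vertices of short segments of that path as pivots in weak-reachability sets, and bound the out-weight at $u$ by counting pairs (pivot, vertex near pivot). But there is a genuine gap at exactly the step you flag as ``the main obstacle,'' and the claim you would need there is false as stated. You only admit pivots $z$ that are weakly $q$-reachable from an \emph{endpoint} of the edge (via a prefix of $P^{xy}$), and you require the opposite endpoint to lie in $N_{G}^{\lfloor p/2\rfloor}[z]$. Since $P^{xy}$ is a shortest path, a pivot at position $j$ from $x$ has the opposite endpoint at distance exactly $s-j$, so when $s=\di_G(x,y)=p$ with $p$ even both requirements force $j=p/2$: the \emph{only} admissible pivot is the middle vertex of the path, and it must be a record minimum from one of the two sides. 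If the middle vertex is the $\sigma$-largest vertex of the path, no admissible pivot exists and the unit weight of $xy$ cannot be distributed at all; already $p=2$ exhibits this (a path $v_{0}v_{1}v_{2}$ with $v_{1}$ $\sigma$-above both $v_{0}$ and $v_{2}$). Such configurations do occur for orderings witnessing $\wc_{q}(G)$: take the disjoint union of the path $v_{0}\dots v_{p}$ with a bounded-degree graph $H$ satisfying $\wc_{q}(H)\ge 2q+1$; any ordering that is optimal on $H$ and arbitrary on the path component witnesses $\wc_{q}(G)$, and one may make $v_{p/2}$ the largest vertex of its path. So ``arguing with the whole chain of record minima'' cannot rescue the scheme: in this configuration the chain contains no vertex inside the required window, and the orientation $w$ is simply not defined (or not controlled) on that edge.

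The paper's proof avoids this by decoupling the pivot from the endpoints. The weight of $e=uv$ is charged to the endpoint $u$ lying far (distance at least $s/2$ along $Q_{e}$) from the $\sigma$-least vertex $l_{e}$ of the path, and in the count at $u$ the pivot $l_{e}$ is allowed to be weakly $\lceil p/2\rceil$-reachable from an \emph{intermediate} path vertex $u_{h}\in N^{h}(u)$ with $0\le h\le\lfloor p/2\rfloor$, not from $u$ itself; the vertex $v$ is then within distance $\lfloor p/2\rfloor-h$ of $l_{e}$. The out-weight at $u$ is bounded by summing over $h$ the quantity $|N^{h}(u)|\cdot q\cdot\sum_{j}(\Delta-1)^{j}$, which is precisely where the factor $\frac{\Delta}{\Delta-2}\lfloor\frac{p+1}{2}\rfloor(\Delta-1)^{\lfloor p/2\rfloor}$ arises. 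If you want to keep your bookkeeping, you must add this extra layer (a choice of nearby vertex from which the pivot is weakly reachable), after which your bound $\wc_{q}(G)\cdot L$ turns into a sum over $h$ of the same shape as inequality \eqref{h=0} in the paper; without that layer the construction of $w$ is incomplete, so the proposal as written does not prove the theorem.
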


\begin{proof}
	Suppose $G$ is a graph with $\Delta(G)\leq\Delta$ and $\sigma\in \Pi(G)$ witnesses that $\wc_{\lceil p/2\rceil}(G)=q$. 

	By Proposition~\ref{fact}.\ref{F1}, 
	 it suffices to construct a weak orientation $w$ such that 
	\[w^{+}(G)\le  \frac{\Delta}%4HK Fixed definition of w^{+}.
	{\Delta-2}\lfloor\frac{p+1}{2}\rfloor q(\Delta-1)^{\lfloor p/2 \rfloor}.\]
	%4HK 
	
	Consider any edge $e=uv$ 
	in $G^{p}$.  
	Then there is a path of length
	at most $p$ that connects $u$ and $v$. %the ends of $e$. 
	Choose  	
	% Then there is 
	such a $u,v$-path $Q_{e}=u_{0}\dots u_{s}\subseteq G$ with minimum length $s=\|Q_{e}\|\le p$. 
	Let 
	$l_{e}$ be the $\sigma$-least vertex in $Q_{e}$. If $e$ has a unique end, say $u$, with the distance $\|uQ_{e}l_{e}\|\ge  s/2$, then set $w(\vec {uv})=1$ and $w(\vec {vu})=0$; else set $w(\vec {uv})=\frac{1}{2}=w(\vec {vu})$.

	Consider any vertex $u\in V$, and suppose $e=uv\in E$ with $w(\vec{uv})>0$. %%HK: added \vec
	Then $Q_e$ has the form $u_0Q_{e}u_hQ_{e}u_iQ_{e}u_s$, where $l_{e}=u_{i}$, $h=0$ if $i\le \lceil s/2\rceil$ and $h=i- \lceil s/2\rceil$ else. 
	Then
	\begin{align}\label{path}
	\mbox{$0\le h\le \lfloor \frac{p}{2}\rfloor$, $u_h\in N^h(u)$, $l_{e} = u_i \in W^{ \lceil s/2\rceil}_{\sigma}[u_h]$ and $v\in N^{s-i}(l_{e})$.}  
	\end{align}
	Moreover, if $h=0$ then $w(\vec{uv})=\frac{1}{2}=w(\vec{vu})$.
	
	 Thus  $w^{+}(u)$ is at most the number of ways to pick $h>0,u_i,v$ satisfying \eqref{path} plus one half the number ways to pick $h=0,u_{i},v$ satisfying \eqref{path}. By the definition of the $h$-th open neighborhood,  
	$|N^h(u)| \le 
	\Delta(\Delta-1)^{h-1}$ and $|N^{s-i}(l)|\le (\Delta-1)^{s-i}$; also $|W^{ \lceil p/2\rceil}_{\sigma}[u_h]|\le q$. Noticing that the special case $h=0$ accounts for the first term on the RHS of  \eqref{h=0}, 
	we have 
	\begin{align}
	%\begin{split}
	w^{+}(u) \label{h=0}
	&\le \frac{1}{2}q\sum_{j=0}^{\lfloor p/2\rfloor}(\Delta-1)^{j}+\sum_{h=1}^{\lfloor{p}/{2}\rfloor} \Delta(\Delta-1)^{h-1}q\sum_{j=0}^{\lfloor p/2\rfloor-h}(\Delta-1)^{j}
	\\&\le \frac{1}{2}q\frac{\Delta(\Delta-1)^{\lfloor p/2\rfloor}}{\Delta-2}+\sum_{h=1}^{\lfloor{p}/{2}\rfloor} \Delta(\Delta-1)^{h-1}q\frac{(\Delta-1)^{\lfloor p/2\rfloor-h+1}}{\Delta-2}\notag
	\\&\le \frac{\Delta}{\Delta-2}\lfloor\frac{p+1}{2}\rfloor q(\Delta-1)^{\lfloor p/2\rfloor}. \notag%\qedhere
	\end{align}
	%\end{split} 
	\end{proof} 

The ratio obtained by dividing the bound of Theorem \ref{AH-thm} %,3.1 
by the lower bound $L$ from eq. \eqref{LB},  
%by the lower bound L from eq. (1)
is clearly exponential. Dividing the bound of Theorem \ref{Kral-thm} %3.2 
by $L$ we get: 
\[
\begin{split}
 & \frac{\left\lfloor \sqrt{\frac{91p-118}{384}}(\Delta+1)^{(p+1)/2} \right\rfloor +\Delta + 1}{\frac{\Delta(\Delta-1)^{\lfloor p/2\rfloor}-2}{\Delta-2}} 
  \ge \frac{\sqrt{\frac{91p-118}{384}}(\Delta+1)^{(p+1)/2}}{(\Delta-1)^{\lfloor p/2\rfloor}} \cdot \frac{(\Delta-1)^{\lfloor p/2\rfloor}}{\frac{\Delta(\Delta-1)^{\lfloor p/2\rfloor}-2}{\Delta-2}} \\
 & \ge \sqrt{\frac{91p-118}{384}} 
 \cdot \frac{\Delta-2}{\Delta}
 \cdot \left(1+ \frac{2}{\Delta -1}\right)^{p/2}, 
\end{split}
\]
which is also exponential. But the ratio obtained by dividing the bound of Theorem \ref{main-thm} % 3.7 
by $L$ is polynomial in $p$  whenever $\wc_p(G)$ is polynomial in $p$.  In particular this is the case for graphs with bounded tree width and graphs with no $K_t$ minor, including graphs with bounded genus; and graphs with no $K^{\ast}_{s,t}$-minor (where $K^{\ast}_{s,t}$ is the complete join of $K_s$ and $\overline{K_t}$).

\section {On the coloring number of the square of graphs}

\subsection{Previous results} 

The study of $\chi(G^2)$ was initiated by Wegner in \cite{W}, and has been actively studied ever since.
In \cite{CC}, Charpentier made the following conjectures.  

\begin{conjecture} \label{conj-CC-1} (\cite{CC})
	There exists an integer $D$ such that every graph $G$ with $\Delta(G) \ge D$ and $\mad(G) < 4$ has $\chi(G^2) \le 2 \Delta(G)$.  
\end{conjecture}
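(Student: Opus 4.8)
The plan is to attack Conjecture~\ref{conj-CC-1} by the discharging method on an edge-minimal counterexample, since the purely density-based tools of this paper are provably too weak here: for $\mad(G)<4$ the coloring-number bound of Section~3 (the case $k=2$) gives only $\col(G^2)\le 3\Delta+5$, and $\chi(G^2)\le\col(G^2)$ therefore falls short of the target $2\Delta$, which leaves essentially no slack. So I would instead fix a large threshold $D$, let $\Delta:=\Delta(G)\ge D$, and take $G$ with $\mad(G)<4$ and $\chi(G^2)>2\Delta$ that is minimal in $\|G\|$. Because $\mad(G)<4$ forces $\|H\|/|H|<2$ for every subgraph $H$, Proposition~\ref{fact} supplies an orientation $\vec G$ of $G$ with maximum out-degree at most $2$; this out-degree-$2$ structure is the combinatorial backbone I would use to control color demands.

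The core of the proof is a catalogue of reducible configurations. The governing estimate is that a single uncolored vertex $v$ extends a $2\Delta$-coloring of $(G-v)^2$ as long as at most $2\Delta-1$ colors appear on $N^2[v]\setminus\{v\}$, and
\[
|N(v)|+|N^2(v)|\ \le\ \sum_{u\in N(v)}d(u).
\]
Hence a degree-$2$ vertex is extendable unless both its neighbors have degree exactly $\Delta$, and more generally any low-demand vertex can be suppressed. I would therefore prove that $G$ contains none of: a vertex of degree $1$; a degree-$2$ vertex with a neighbor of degree $<\Delta$; short paths or small trees of degree-$2$ vertices; and further local patterns dictated by the discharging below. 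Each reduction deletes the configuration, colors the smaller graph by minimality, and extends; when several vertices are freed at once I would phrase the extension as a list-assignment on the induced square of the deleted set and verify a Hall-type condition, and when the count is exactly tight I would instead recolor one already-colored neighbor to force a repeated color among the at most $2\Delta$ constraints, using the slack in that neighbor's own $2$-distance palette.

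With enough reducible configurations in hand, I would discharge from the initial charge $\mu(v)=d(v)-4$, so that $\sum_v\mu(v)=2\|G\|-4|G|<0$ by $\mad(G)<4$. The rules would move charge from vertices of surplus degree ($d(v)>4$) toward their deficient low-degree neighbors, routed along edges and guided by the out-orientation; the key quantitative point is that a vertex of degree close to $\Delta$ can donate a fixed amount to each neighbor and still end nonnegative precisely because $\Delta\ge D$ is large. If no reducible configuration occurs, the rules would leave every vertex with $\mu(v)\ge0$, contradicting the negative total and completing the argument.

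The main obstacle is the tight configuration of a degree-$2$ vertex both of whose neighbors have degree $\Delta$: here the elementary count meets $2\Delta$ forbidden colors with zero margin, so extendability is not automatic. The crux is to show that such vertices cannot accumulate — that two of them cannot share a neighbor, and more generally cannot lie within distance $2$ without creating either an excluded configuration or a usable color coincidence — and then to fold exactly these vertices into the discharging so that they are paid for by the heavy $\Delta$-neighbors around them. Getting the reducible family rich enough to exclude every positive-final-charge case, while keeping each tight vertex solvent, is the delicate balance; the large-$\Delta$ hypothesis and the maximum-out-degree-$2$ orientation are the two levers I expect to make this accounting close.
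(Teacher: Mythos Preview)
Your proposal attempts to \emph{prove} Conjecture~\ref{conj-CC-1}, but the paper does not prove it; on the contrary, the paper records that the conjecture is \emph{false}. Immediately after stating Conjectures~\ref{conj-CC-1} and~\ref{conj-CC-2}, the paper notes that Kim and Park~\cite{KP} disproved both: for every positive integer $D$ there exists a graph $G$ with $\Delta(G)\ge D$ and $\mad(G)<4$ such that $\chi(G^2)=2\Delta(G)+2$. Hence no discharging scheme, however refined, can establish the statement as written.

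This is not a matter of a missing lemma or a delicate balance you have not yet closed; the target inequality $\chi(G^2)\le 2\Delta(G)$ simply fails for infinitely many $\Delta$. In your own outline you already flag the tight configuration of a degree-$2$ vertex with two $\Delta$-neighbors as the crux; the Kim--Park constructions exploit exactly such tight local structure to force two extra colors, so the ``usable color coincidence'' you hope to manufacture cannot always be found. Any salvage would have to aim at a weaker conclusion (e.g., $2\Delta(G)+C$ for some constant $C$, or an additional hypothesis along the lines of Theorem~\ref{thm-KP}), not at the conjecture itself.
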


\begin{conjecture} \label{conj-CC-2} (\cite{CC})
	For each integer $k \ge 3$, there exists an integer $D_k$ such that every graph $G$ with $\Delta(G) \ge D_k$ and $\mad(G) < 2k$ has 
	$\chi(G^2) \le k\Delta(G) - k$.
\end{conjecture}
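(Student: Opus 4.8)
\emph{Plan for Conjecture~\ref{conj-CC-2}.}
The plan is to attack Conjecture~\ref{conj-CC-2} by the reducibility-and-discharging method for $2$-distance colourings, sharpened so that the hypothesis $\Delta(G)\ge D_k$ absorbs all lower-order error. The guiding reformulation is that a proper colouring of $G^{2}$ is precisely a colouring of $V(G)$ in which every closed neighbourhood $N_{G}[v]$ is rainbow: two vertices lie within distance $2$ in $G$ iff they belong to a common closed neighbourhood $N_{G}[w]$. Thus the goal is a $(k\Delta-k)$-colouring of $V(G)$ making each $N_{G}[v]$ receive $d_{G}(v)+1$ distinct colours. Since $d_{G}(v)\le\Delta$ and $k\ge3$, any one such constraint is individually satisfiable with room to spare; the entire difficulty is to meet them all at once, and in particular to do so with fewer colours than the greedy coloring-number estimate of the previous subsection provides.

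First I would fix the scaffolding. By Proposition~\ref{fact}.\ref{F2}, $\mad(G)<2k$ yields $\lceil\mad(G)\rceil\le 2k$ and hence an orientation $\vec G$ with $\Delta^{+}(\vec G)\le k$; since every subgraph of $G$ again has $\mad<2k$, I may pass to a minimal counterexample and reuse the orientation. I would then split the vertices into \emph{light} and \emph{heavy} according to a threshold $\theta=\theta(k)$ independent of $\Delta$. The forbidden colours at a vertex $v$ are the colours on its $G^{2}$-neighbours, which split, according to the orientation of the two edges of each length-$2$ path $v\,w\,x$, into four families: out-out, out-in, in-out and in-in. The orientation bounds the first three by $O(k^{2})$, $O(k\Delta)$ and $O(k\Delta)$ respectively; their sum is of order $2k\Delta$, comparable with the coloring-number bound of the previous subsection. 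The whole point of the conjecture is therefore to save a factor close to two, and this must come from a genuinely non-greedy, global colour assignment rather than from any vertex ordering.

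The heart of the argument is a catalogue of reducible configurations together with a discharging scheme. Deleting a vertex $v$ is delicate here, because $(G-v)^{2}$ can lose the clique on $N_{G}(v)$; so reducibility must be established by recolouring moves that re-rainbow the affected neighbourhoods — Kempe-style chains and colour swaps localised around heavy clusters — rather than by naive greedy extension. Assuming none of the listed configurations occurs, I would assign charge $d_{G}(v)-2k$ to each vertex, so that $\mad(G)<2k$ forces the total charge to be negative, and then discharge from heavy to light vertices. Each reducibility lemma would be designed precisely to guarantee that a configuration-free graph admits a nonnegative final charge at every vertex, contradicting the negative total and so completing the minimal-counterexample argument.

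The main obstacle is the in-in family, which the orientation does not bound at all: it records the length-$2$ paths $x\to w\to v$, and these are exactly the large cliques created by high-degree vertices. Two adjacent near-$\Delta$ vertices already force an almost-$2\Delta$ clique in $G^{2}$ to be rainbow, consuming close to $2\Delta$ colours against a budget of only $k\Delta-k\ge 3\Delta-3$. Making the reducibility sharp enough that the colours saved in sparse regions can be reused simultaneously around every heavy cluster — so that the global total stays below $k\Delta-k$ — is the delicate step, and it is the reason a large threshold $\Delta\ge D_{k}$ is essential: it lets the additive $O(k^{2})$ terms vanish into lower order and forces the discharging averages to favour the light vertices. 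This is also why the conjecture lies strictly beyond the $\col(G^{2})$ estimate of this section, since beating that estimate demands the non-greedy global colour exchanges whose control is the crux of the whole approach.
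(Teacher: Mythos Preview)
Your proposal attempts to \emph{prove} Conjecture~\ref{conj-CC-2}, but the paper does not prove this statement: immediately after stating it, the paper reports that Kim and Park~\cite{KP} \emph{disproved} it. For every $k\ge 3$ and every $D\ge k^{2}-k$ they construct a graph $G$ with $\mad(G)<2k$ and $\Delta(G)\ge D$ such that $\chi(G^{2})\ge k\Delta(G)+k>k\Delta(G)-k$. Hence no threshold $D_{k}$ with the claimed property exists, and any attempted proof is doomed from the outset.

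Consequently the discharging-and-reducibility scheme you outline cannot be completed, no matter how carefully the reducible configurations are chosen. The obstacle you identify as ``the delicate step''---controlling the in-in paths so that the savings in sparse regions offset the heavy cliques---is not merely delicate but impossible in general: the Kim--Park constructions produce graphs in which the cliques forced in $G^{2}$ by heavy neighbourhoods genuinely require more than $k\Delta(G)-k$ colours. So the gap in your plan is not a missing lemma but a false target; the correct response to this conjecture is a counterexample, not a proof.
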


In \cite{CC},  
some examples are given to show 
that Conjectures \ref{conj-CC-1} and \ref{conj-CC-2} 
are best possible if they are true.  
In \cite{KP}, Kim and Park disproved Conjectures \ref{conj-CC-1} and \ref{conj-CC-2} 
by showing that, for any positive integer $D$, there is a graph $G$ with $\Delta(G) \ge D$ and $\mad(G) < 4$ such that $\chi(G^2) = 2\Delta(G)  + 2$; 
for any integers $k$ and $D$ with $k \ge 3$ and $D \ge k^2 - k$, there exists a graph $G$ with $\mad(G) < 2k$ and $\Delta(G) \ge D$, such that $\chi(G^2) \ge  k\Delta(G) + k$. 

For the upper bounds, the following result is \cite[Theorem 4]{HKP} by Hocquard, Kim and Pierron (very recently); a similar version was given by Charpentier in \cite{CC}. The version in \cite{HKP} is proved by using a variant of discharging, and  fixed some errors and inaccuracies of the original proof. 
%%%DY added the citation of Hocquard, Kim and Pierron

\begin{theorem} \label{thm-CC-HKP} (\cite{CC,HKP}) 
	Let $k$ be an integer and $G$ be a graph with $\mad(G) < 2k$. Then
\[
\chi(G^2) \le \max \{ (2k-1)\Delta(G)-k^2+k+1, (2k-2)\Delta(G)+2k^3+k^2+2,  (k-1)\Delta(G)+k^4+2k^3+2 \}. 
\]
\end{theorem}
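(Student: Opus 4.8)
The plan is to construct a weak orientation $w$ of $G^2$ with small maximum out-weight and then appeal to Proposition~\ref{fact}.\ref{F1}. Since $\mad(G)\le 2k$, Proposition~\ref{fact}.\ref{F2} supplies an orientation $\vec G$ of $G$ with $\Delta^+(\vec G)\le k$; since $\mad(G)>2k-2$ and $\mad(G)\le\Delta(G)$, we also record $\Delta(G)\ge 2k-1$, which will be used to absorb lower-order terms. Using Proposition~\ref{fact}.\ref{F1} and the inequality $\col(H)\le\lfloor\mad(H)\rfloor+1$, it suffices to build $w$ with $\Delta_w^+(G^2)\le\frac12\bigl((2k-1)\Delta(G)+2k\bigr)$, since then $\col(G^2)\le\lfloor\mad(G^2)\rfloor+1\le(2k-1)\Delta(G)+2k+1$.

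Every edge of $G^2$ comes from a path of length $1$ or $2$ in $G$. On the edges of $G^2$ that lie in $G$ I let $w$ agree with $\vec G$; these contribute at most $k$ to $w^+(v)$ for each $v$ (or at most $\tfrac12\Delta(G)$ if one prefers to split them, which is relevant only when $\Delta(G)=2k-1$). For an edge $uv$ of $G^2$ with $\di_G(u,v)=2$ I pick a common neighbour $m=m_{uv}$ and define $w(\vec{uv}),w(\vec{vu})$ from how $\vec G$ orients the two path edges $um$ and $mv$: the endpoint having an out-arc toward $m$ pays the whole weight of $uv$, and if both of $u,v$ have an out-arc toward a common neighbour, or neither does, the weight is split evenly. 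The freedom is the choice of the middle $m$, which is exploited to keep $v$ from paying whenever a common neighbour allows it.

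To bound $w^+(v)$ one groups the length-$2$ edges at $v$ by their middle: a middle $m$ with $v\to m$ in $\vec G$ lies in $N^+_{\vec G}(v)$, so there are at most $k$ of these and each carries at most $\Delta(G)-1$ second neighbours of $v$; a middle $m$ with $m\to v$ can force $v$ to pay only a half, and only for the at most $k-1$ further vertices that $m$ also points at. Summing these together with the $G$-edge contribution gives a bound of the right shape. \emph{The main difficulty} is that this accounting, carried out by a plain local rule, charges both endpoints of a length-$2$ edge and thereby inflates the leading coefficient from $2k-1$ toward $4k$, while the ``both-out middle'' case — neither $u$ nor $v$ points to any common neighbour — has no natural owner; such a rule yields only $(2k-1)\Delta(G)+O(k^2)$. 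Cutting the coefficient to $2k-1$ and the additive constant to $2k+1$ requires assigning the middles and the payers globally and coherently while the linear order on $V(G^2)$ is built, maintaining an invariant that ties the out-arcs of $\vec G$ to that order — this is the role of the Harmonious Strategy. Once that is arranged, $\Delta_w^+(G^2)\le\frac12\bigl((2k-1)\Delta(G)+2k\bigr)$, so $\mad(G^2)\le(2k-1)\Delta(G)+2k$, which completes the proof.
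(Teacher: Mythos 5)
There is a genuine gap, and it starts with the target: what you prove (or sketch) is not the statement in question. Theorem~\ref{thm-CC-HKP} is a quoted result of Charpentier and of Hocquard--Kim--Pierron, established in \cite{HKP} by a discharging argument, and it assumes only $\mad(G)<2k$; it does not assume $\mad(G)>2k-2$, so your step ``$\mad(G)>2k-2$ and hence $\Delta(G)\ge 2k-1$'' uses a hypothesis you do not have. More importantly, the bound you aim for, $(2k-1)\Delta(G)+2k+1$, is the bound of the paper's \emph{new} Theorem~\ref{col-main}, and it does not imply the bound of Theorem~\ref{thm-CC-HKP}: when $\Delta(G)$ is large the maximum on the right-hand side equals $(2k-1)\Delta(G)-k^2+k+1$, which is smaller than $(2k-1)\Delta(G)+2k+1$ by $k^2+k$. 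So even a complete version of your argument would prove a different (in that regime weaker) statement, not the cited theorem, whose negative quadratic additive term is exactly what the discharging proof buys and what a greedy/coloring-number argument of this shape is not shown to reach.

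Second, the argument itself is incomplete at its only hard point. Your local rule for orienting the distance-$2$ edges of $G^2$ (charge the endpoint with an out-arc into the chosen middle, split otherwise) gives, as you concede, out-weight of order $k\Delta+\tfrac{(k-1)}{2}\Delta$, i.e.\ leading coefficient well above the required $\tfrac{2k-1}{2}$; the reduction to $\tfrac12\bigl((2k-1)\Delta(G)+2k\bigr)$ is simply asserted via ``this is the role of the Harmonious Strategy.\ Once that is arranged\dots''. No algorithm, invariant, or counting is supplied, and it is not clear that the Harmonious Strategy produces a \emph{weak orientation of $G^2$} at all: in the paper's own use of it (Theorem~\ref{col-main}) the strategy builds a vertex ordering and one bounds $|N^2[v]\setminus U|$ directly through the contribution mechanism of Algorithm~\ref{A1}, rather than bounding $\Delta_w^+(G^2)$ and invoking Proposition~\ref{fact}. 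As it stands, the proposal neither proves Theorem~\ref{thm-CC-HKP} nor contains a complete proof of the weaker bound it actually targets.
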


Kim and Park \cite{KP} proved the following theorem. 
\begin{theorem} \label{thm-KP} (\cite{KP})
	Let $c$ be an integer such that $c \ge 2$. If a graph $G$ satisfies $\mad(G) < 4-\frac{1}{c}$ and $\Delta(G) \ge 14c - 7$, then $\chi_l(G^2) \le 2\Delta(G)$.  
\end{theorem}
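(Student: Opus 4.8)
The plan is to argue by contradiction via the discharging method. Suppose $G$ is a vertex-minimal counterexample: $\mad(G) < 4 - 1/c$, $\Delta := \Delta(G) \ge 14c - 7$, and there is a list assignment $L$ with $|L(v)| = 2\Delta$ for every $v$ such that $G^2$ has no proper $L$-coloring, while the square of every graph obtained from $G$ by deleting vertices is $L$-colorable. First I would record the elementary facts: the square operation interacts with vertex deletion through $(G - S)^2 \subseteq G^2 - S$, so any $L$-coloring of $(G - S)^2$ furnished by minimality agrees with $G^2$ on $V \setminus S$, and the extra edges of $G^2$ matter only when one extends the coloring to $S$; this is where all the work lies. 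For the charge I set $\mu(v) = d(v) - (4 - 1/c)$, so that $\sum_{v} \mu(v) = 2\|G\| - (4 - 1/c)|G| < 0$ by the hypothesis on $\mad(G)$; the goal is to redistribute charge so that every vertex ends with $\mu^*(v) \ge 0$, yielding the contradiction $0 \le \sum_v \mu^*(v) = \sum_v \mu(v) < 0$.

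Next I would establish the reducible configurations. The clean starting point is that $G$ has minimum degree at least $2$: a vertex $v$ of degree at most $1$ with neighbor $u$ satisfies $(G - v)^2 = G^2 - v$ (a leaf lies on no shortest path between other vertices) and $d_{G^2}(v) = d(u) \le \Delta < 2\Delta$, so a coloring obtained by minimality extends greedily. The heart of the matter is the vertices of degree $2$ and $3$, which carry the negative charge. For a degree-$2$ vertex $v$ with neighbors $u, w$ one has $d_{G^2}(v) \le d(u) + d(w)$, which can be as large as $2\Delta$; hence a single such vertex need not be greedily reducible. I would instead prove reducibility of local clusters of low-degree vertices — for instance two degree-$2$ vertices sharing a neighborhood, a degree-$2$ vertex adjacent to another low-degree vertex, or a high-degree vertex carrying more low-degree neighbors than the discharging can afford — by deleting the cluster, $L$-coloring the remainder by minimality, and extending. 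The extension step is where $\Delta \ge 14c - 7$ is used: the number of already-colored $G^2$-neighbors of each deleted vertex is bounded strictly below $2\Delta$, leaving a free color, and when the counting is tight I would color the deleted vertices simultaneously through a system-of-distinct-representatives / Hall's-theorem argument, exploiting color repetitions forced in the shared neighborhoods.

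With the reducible configurations in hand I would set the discharging rules so that every large-degree vertex sends charge to its degree-$2$ and degree-$3$ neighbors — a degree-$2$ vertex drawing roughly $1 - \tfrac{1}{2c}$ from each of its (necessarily high-degree, by reducibility) neighbors, and a degree-$3$ vertex drawing its smaller deficit analogously. A vertex of degree $\ge 4$ that is not large keeps its small surplus $\tfrac{1}{c}$; a vertex $u$ of large degree $d$ sends at most about $d\bigl(1 - \tfrac{1}{2c}\bigr)$ in total, so $\mu^*(u) \gtrsim \tfrac{d}{2c} - (4 - \tfrac{1}{c})$, which is nonnegative precisely once $d$ exceeds the stated threshold. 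The exact constant $14c - 7$ comes out of balancing these flows against the worst admissible neighborhood of a low-degree vertex permitted by the reducibility lemmas — in particular the case of a degree-$2$ vertex one of whose neighbors is itself of small degree, forcing its other neighbor to absorb more of the deficit.

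The main obstacle is the reducibility of the degree-$2$ vertices sitting between two near-maximum-degree vertices, where $d_{G^2}(v)$ meets the list size $2\Delta$ exactly and a naive greedy extension fails. Handling this requires the simultaneous-coloring argument above together with sharp control of how many such vertices a high-degree vertex may host; getting these two ingredients to be simultaneously compatible — each excluded configuration genuinely reducible, yet few enough configurations forbidden that the discharging still closes — is the delicate part, and it is exactly the tension that fixes the degree threshold at $14c - 7$.
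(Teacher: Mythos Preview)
The paper does not prove this theorem at all: it is quoted from Kim and Park \cite{KP} as background in the ``Previous results'' subsection, so there is no proof here to compare your proposal against. Your discharging outline is the natural strategy and is, in spirit, what one expects the original paper \cite{KP} to do, but nothing in the present paper corresponds to it.

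That said, as written your proposal is only a scaffold, not a proof. The actual content of such an argument lies in the precise list of reducible configurations and the exact discharging rules, and you have supplied neither: phrases like ``local clusters of low-degree vertices'', ``roughly $1-\tfrac{1}{2c}$'', and ``$\mu^*(u)\gtrsim$'' are placeholders for the computations that determine whether the method succeeds and where the threshold $14c-7$ comes from. In particular, the key case you yourself flag---a degree-$2$ vertex with two near-$\Delta$ neighbors, where $d_{G^2}(v)$ can equal $2\Delta$---is not resolved by anything you wrote; invoking ``a Hall's-theorem argument'' without specifying the bipartite system or verifying Hall's condition is exactly the step where such proofs either work or fail. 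If you want this to stand as a proof you must state the configurations and rules explicitly and carry out the extension and charge calculations; otherwise it should be presented as a reference to \cite{KP}, as the paper does.
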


  Bonamy, L\'{e}v\^{e}que, and Pinlou in \cite{BLP3} proposed the following question.

\begin{question} \label{Que-1} (\cite{BLP3}) 
	What is, for any $C \ge  1$, the maximum $m(C)$ such that any graph $G$ with $\mad(G) < m(C)$ satisfies   $\chi_l(G^2) \le \Delta(G) + C$. %HK: removed ``that''
\end{question}

As a natural generalization of Question \ref{Que-1}, the following question seems interesting, especially by taking Conjectures \ref{conj-CC-1}, \ref{conj-CC-2} and their recent developments into considerations.  

\begin{question} \label{Que-2}  
	What is, for a given integer $k \ge 1$ and any $C$ (if $k=1$, then  $C \ge  1$), the minimum $m(C)$ such that any graph $G$ with $\mad(G) \le  2k - m(C)$ satisfies  
	  $\chi_l(G^2) \le k\Delta(G) + C$. 

\end{question}

\subsection{New result }
\label{subsecVG}

The techniques we use in this section have their roots in the study of coloring games on graphs, in particular, the Harmonious Strategy introduced in \cite{KY2}.  
 In fact, a more recent game theoretic result of Yang  \cite[Theorem 4.5]{Y-q}, 
 already yields the following corollary.

\begin{corollary} (\cite{Y-q}) \label{col1} 
	Let $k$ be a positive integer, and let $G$ be a graph with $\mad(G)\leq2k$ and $\Delta(G) \geq 2k-2$. Then $\col(G^2)\leq (3k-2)\Delta(G)-k^2+4k+2$.
\end{corollary}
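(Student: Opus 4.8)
The plan is to obtain the bound as a direct consequence of Theorem~4.5 of \cite{Y-q}, Yang's game-theoretic (Harmonious Strategy) bound on the coloring number of graph powers; so the real content is invoking that theorem with the correct parameters and then simplifying. First I would recast the density hypothesis in the form that the game argument consumes: by Proposition~\ref{fact}.\ref{F2}, $\mad(G)\le 2k$ gives $\lceil\mad(G)\rceil\le 2k$, hence a $0,1$-orientation $\vec G$ of $G$ with maximum out-degree $\Delta^{+}(\vec G)\le k$ (equivalently, a degeneracy/arboricity-type parameter of size $\le k$). This out-degree bound, together with $\Delta(G)$, is what Theorem~4.5 takes as input to bound the (game) coloring number of $G^{2}$, and in particular $\col(G^{2})$.

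Next I would specialize Theorem~4.5 to the power $p=2$ with out-degree parameter $k$. The Harmonious Strategy of \cite{KY2}, run on $\vec G$, orders $V(G)$ so that each vertex $u$ sees only few $\sigma$-earlier neighbours in $G^{2}$: a degeneracy-type count of its distance-$1$ back-neighbours, the distance-$2$ neighbours reached through an out-edge of $u$ (bounded by $k\Delta(G)$), and the ``in--in'' distance-$2$ neighbours, whose already-marked number the strategy keeps bounded by $O(k^{2})$ --- this last group being the one for which a naive orientation count fails and the strategy is essential. Adding these, with the corrections the strategy licenses for double-counted neighbours, gives a total of the shape $(3k-2)\Delta(G)+O(k^{2})$; the hypothesis $\Delta(G)\ge 2k-2$ is exactly the regime in which this leading branch dominates the lower-order branches of the bound (compare the way the first term dominates in Theorem~\ref{thm-CC-HKP}), and a careful collection of the constant terms --- including the slack lost in rounding $\mad(G)$ up to an integer out-degree --- produces the stated $-k^{2}+4k+2$.

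The main obstacle is simply the bookkeeping at the interface: checking that ``$\mad(G)\le 2k$'' feeds into Theorem~4.5 as the right parameter, that its side condition specializes to $\Delta(G)\ge 2k-2$, and that none of the negative lower-order terms is dropped when the general formula is evaluated at $p=2$. The genuinely hard part --- that Alice can hold the dangerous distance-$2$ back-neighbour count down to roughly $k^{2}$ --- is already packaged in the cited theorem, so no new game-theoretic construction is needed; everything else is a deterministic computation.
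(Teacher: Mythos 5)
Your proposal matches the paper's treatment: the paper gives no proof of its own for this corollary, presenting it as an immediate consequence of \cite[Theorem 4.5]{Y-q}, which is exactly your route (together with the standard conversion of $\mad(G)\le 2k$ into an orientation of maximum out-degree at most $k$ via Proposition~\ref{fact}.\ref{F2}). Since the substantive content is packaged in the cited theorem, your reconstruction of its internal Harmonious-Strategy bookkeeping cannot be checked against anything in this paper, but the approach---invoke Yang's result and simplify under the hypothesis $\Delta(G)\ge 2k-2$---is essentially the same.
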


The next theorem is our result on the coloring number of the square of graphs. In its proof we construct an ordering of the vertices of a graph $G$ to witness the given bound on $\col(G^{2})$. %%%HK G^{2}, not G
 This is done by iteratively adding new vertices to the end of the initial segment of already ordered vertices.  (Contrast this with the usual method of adding new vertices at the front of the final segment  already constructed.)   There is a natural tension between adding a vertex too late and thus giving it too many earlier neighbors, and adding it too soon, and thus giving too many other vertices an earlier neighbor. The Harmonious Strategy provides a scheme for balancing these considerations by ensuring that no vertex is chosen before its out-neighbors and  distance-$2$ out-neighbors have been considered. See \cite{VdHK} for another application of the Harmonious Strategy to a non-game problem.

\begin{theorem} \label{col-main}
	Let $k >0$ be an integer. If $G$ is a graph with $2k-2 < \mad(G) \leq 2k$,    
	then 
	\begin{align}
	 \col(G^2) \leq (2k-1)\Delta(G)+2k+1.\label{*}
	 \end{align}
\end{theorem}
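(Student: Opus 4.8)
The plan is to build a linear order $\sigma$ of $V(G)$ directly, processing vertices one at a time and appending each newly \emph{activated} vertex to the tail of the current initial segment, guided by a Harmonious-Strategy-type discipline. First I would fix a weak orientation (or, via Proposition~\ref{fact}.\ref{F2}, an integer orientation) of $G$ with maximum out-degree $k$, available since $\mad(G)\le 2k$; call a vertex $v$ a \emph{forward neighbor} of $u$ if $\vec{uv}$ is oriented out of $u$, so each vertex has at most $k$ forward neighbors and at most $\Delta(G)$ neighbors total. The key structural fact is that an edge $uv$ of $G^2$ arises either from an edge of $G$ (at most $\Delta(G)$ such neighbors of any vertex) or from a path $u{-}x{-}v$ of length $2$ in $G$; in the latter case $u$ is a neighbor of $x$ and $v$ is a forward or backward neighbor of $x$. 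The ordering discipline must guarantee that for every vertex $u$, the number of $G^2$-neighbors of $u$ that precede $u$ in $\sigma$ is at most $(2k-1)\Delta(G)+2k$, which then gives $\col(G^2)\le(2k-1)\Delta(G)+2k+1$ after adding the vertex itself.

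The core of the argument is the activation rule. I would maintain a set $A$ of already-ordered (activated) vertices; to activate a new vertex I pick an appropriate ``ready'' vertex, append it to $\sigma$, and then cascade: whenever activating $v$ causes some vertex $u$ to have all of its ``obligations'' (its forward neighbors in $G$, and perhaps the forward neighbors of those) already activated, $u$ becomes eligible and is activated next, before any unrelated vertex. The Harmonious Strategy bookkeeping is designed so that when $u$ is finally placed, the only earlier $G^2$-neighbors of $u$ are: (i) its at most $\Delta(G)$ genuine $G$-neighbors; (ii) for each of its at most $k$ forward $G$-neighbors $x$, the at most $\Delta(G)-1$ other $G$-neighbors of $x$ plus the at most $k$ forward neighbors of $x$ — but these two-step vertices through a \emph{backward} neighbor $x$ of $u$ are controlled by the rule that $u$ is activated before $x$ unless $x$ had to be activated first for its own reasons. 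Summing the worst case: $\Delta(G)$ from direct neighbors, plus $k$ forward relays each contributing roughly $\Delta(G)$, plus a bounded additive term from the forward-of-forward vertices, lands at $(2k-1)\Delta(G)+2k$; the coefficient $2k-1$ rather than $2k+1$ comes from the fact that a forward neighbor $x$ of $u$ is itself counted once in (i) and that overlaps between the relay sets are accounted for, while the $+2k$ absorbs the forward-of-forward contributions (at most $k$ relays each contributing at most $2$ new vertices not already counted, or a similar tally).

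The main obstacle I anticipate is verifying that the activation cascade always terminates with a \emph{well-defined} total order — i.e., that there is no deadlock where every unactivated vertex is waiting on another unactivated vertex — and simultaneously that the promised bound on earlier neighbors genuinely holds for \emph{every} vertex, including those activated late in a long cascade. This is exactly the delicate part of Harmonious-Strategy arguments: one must set up a potential/priority function on vertices (using the orientation and the partial order of activations) so that (a) at each step some vertex is ready, and (b) the invariant ``$u$ is activated only after all its forward neighbors, and no vertex acquires too many activated $G^2$-neighbors'' is preserved. I would prove this by induction on the step number, carrying the invariant that at every moment the unactivated vertices form a set closed under ``forward neighbor'' (so a readiness argument applies) and that the count of activated $G^2$-neighbors of any vertex never exceeds the target. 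Once that invariant is in place, greedily coloring $G^2$ along $\sigma$ — equivalently, reading off $\col(G^2)$ from $\sigma$ — yields \eqref{*}. The condition $2k-2<\mad(G)$ presumably enters only to make the statement tight / to rule out smaller $k$, not in the upper-bound argument itself, so I would not expect to use the lower bound on $\mad(G)$ except to fix the value of $k$.
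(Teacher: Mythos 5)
Your overall architecture matches the paper's: take an orientation of $G$ with $\Delta^+\le k$ (Proposition~\ref{fact}.\ref{F2}), build $\sigma$ by appending vertices to the end of the already-ordered initial segment, and bound, for each vertex, its back-neighbours in $G^2$. But the mechanism at the heart of your sketch does not work, and the missing idea is precisely the paper's contribution device. Your activation rule --- ``$u$ becomes eligible only when all of its forward neighbours (and forward-forward neighbours) are already activated'' --- together with your proposed invariant that the unactivated vertices are closed under taking forward neighbours, is unsatisfiable as soon as the orientation contains a directed cycle: under that invariant only vertices with no forward neighbours at all can ever become eligible, and for $k=1$ with $G$ a cycle every orientation with $\Delta^+\le 1$ is a directed cycle, so the cascade deadlocks at the very first step. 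The deadlock you flag as ``the main obstacle'' is thus a real obstruction, not a technicality, and your suggested invariant makes it worse rather than resolving it. The paper's Algorithm~\ref{A1} avoids it by \emph{not} making a vertex wait for $N^{+,++}$: a vertex $x$ merely \emph{contributes} to each still-uncollected member of $S_x=N^{+,++}(x)$, handing control along a path of contributions, and is collected as soon as every vertex of $N^{+,++}(x)$ is collected or has received its contribution (property \eqref{C}); the count then rests on the fact that each $v$ receives at most $d^{+,++}(v)\le k^2+k$ contributions (property \eqref{NC}), which bounds the number of vertices of $N^{-,--}(v)$ collected before $v$ (inequality \eqref{N}). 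Nothing in your write-up supplies a substitute for this token-passing step, and without it neither termination nor the key bound on prematurely ordered in-neighbours is available.

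Two further gaps in the quantitative part. First, your tally omits the vertices of $N^{-+}(v)$ (a backward edge followed by a forward edge); for such a vertex $w$ one has $v\in N^{-+}(w)$, so they are in general \emph{not} controlled by any discipline of this type and must simply be counted, contributing the term $(\Delta-d^{+}(v))(k-1)$ in the paper's computation $d^{+,++}(v)+d^{+}(v)\Delta+(\Delta-d^{+}(v))(k-1)\le(2k-1)\Delta+2k$; your heuristic sum ($\Delta$ plus $k$ relays of size $\Delta$ plus $O(k^2)$) is roughly $(k+1)\Delta$, which undercounts exactly this term and is why the true coefficient is $2k-1$. Second, the hypothesis $\mad(G)>2k-2$ is not merely for tightness: it yields $\Delta\ge 2k-1$ (inequality \eqref{D}), which is what justifies replacing $d^{+}(v)$ by $k$ in the final optimization (one needs $\Delta-k+1\ge 0$), so it does enter the upper-bound argument as written.
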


\begin{proof}
Suppose $G=(V,E)$ is a graph with $2k-2 <\mad(G)\leq2k$.     
Thus 
\begin{align}
2k-1\le \Delta:=\Delta(G).\label{D}
\end{align}
By Proposition \ref{fact}.\ref{F2},  
 $G$ has  an orientation $\vec{G}=(V,\vec{E})$ with $\Delta^+:=\Delta^+(\vec{G})\leq k$. Let $L\in \Pi(G)$.

Given a path $P=x_{0}\dots x_{l}\subseteq G$, define the {\em sign-sequence} of $P=x_{0}\dots x_{l}$  to be the sequence $s(P)$ with length $l$ whose $i$-th symbol is ``$+$'' if $v_{i-1}v_{i}\in \vec{E}$ and ``$-$'' if  $v_{i}v_{i-1}\in \vec{E}$. 
For any $x\in V$ and sign-sequence $s\in\{+,-,++,+-,-+,--\}$, let $N^s(x)$ denote the set of vertices $y$ such that there is a shortest $x,y$-path $P\subseteq G$ with $s(P)=s$. Put $d^{s}(x)=|N^{s}(x)|$. 
Also put $ N^{+,++}(x)=N^{+}(x)\cup N^{++}(x)$, 
$d^{+,++}(x)=d^{+}(x)+ d^{++}(x)$, 
$ N^{-,--}(x)=N^{-}(x)\cup N^{--}(x)$ and  
$d^{-,--}(x)=d^{-}(x)+ d^{--}(x)$.

Our task is to construct $\sigma\in\Pi(G)$ witnessing \eqref{*}.  
To do this, we design an algorithm that {\em collects} vertices one at a time. Each time a vertex is collected, it is deleted from the set $U$ of uncollected vertices, and put at the end of the initial segment of $\sigma$ that has  already been constructed. We maintain a set $S_{x} \subseteq  N^{+,++}(x)$ for each  $x\in V$.
The vertex sets $U$ and $S_{x}$ are dynamic---they are updated as the algorithm runs.

We start without any collected vertices, so $U:=V$.  
For all $x\in V$, set $S_{x}:=N^{+,++}(x)$.  
Then  we 
 run Algorithm~\ref{A1} (see below).

\begin{algorithm}
\caption{}
\label{A1}
\begin{algorithmic}[1]
\WHILE {$U\ne \emptyset$}
\STATE $x:=L$-$\min U$
\WHILE {$S_{x} \cap U \neq \emptyset$}
\STATE $y:=L$-$\min S_{x} \cap U$
\STATE $S_{x}:=S_{x}-\left\{ y \right\} $
\IF {$S_{x}\cap U=\emptyset$}
\STATE collect $x$
\ENDIF
\STATE $x:=y$
\ENDWHILE
\STATE collect $x$
\ENDWHILE
\end{algorithmic}
\end{algorithm}

\medskip

When vertex $u\in U$ with $S_{u} \cap U \neq \emptyset$ is assigned to variable $x$ at Line~2 or at Line~9   
	 and then $v\in S_u\cap U$ %%HK
	 is immediately assigned to variable  $y$ at Line~4, we say that {\em $u$ contributes to $v$} and {\em $v$ receives a contribution from $u$}.

	When a vertex $w$ assigned to variable $x$ is collected at Line 7 or Line 11, we have $S_{w} \cap U = \emptyset$, so every vertex $v\in N^{+,++}(w)$ has received a contribution from $w$ or has been collected. Thus:
	\begin{align}\label{C}
	\mbox{When $w\in V$ is collected, it has contributed to all
	$y\in N^{+,++}(w)\cap U$.
	}
	\end{align}
	
	When a vertex $v\in V$ receives a contribution at Line~4, it is still uncollected. It is immediately 
	assigned to variable $x$ at Line~9. If $S_{v}\cap U=\emptyset$ then the inner while-loop ends, %%HK: -
	and  $v$ is collected at Line~11; else $v$ contributes to some $u\in S_{v}\cap U$ at Line~4, and $|S_{v}\cap U|$ is reduced by $1$ at Line~5. If now $S_{v}\cap U=\emptyset$ then $v$ is immediately collected at Line~7. Thus:
	\begin{align}\label{NC}
	\mbox{Each $v\in V$  receives at most $d^{+,++}(v)$ contributions.}
	%%%DY added (v) 
	\end{align}

Consider any uncollected $v\in U$. It suffices to prove that $v$ has at  most $(2k-1)\Delta+2k$ collected neighbors in $G^{2}$, i.e., 
  \[|N^{2}[v]\setminus U|\le(2k-1)\Delta+2k.\]
%	Set $Q:=N^{+,++}(x)\setminus U$, $C_{1}:=N^{-,--}(x)\setminus U$, and $C_{2}:=
%	\bigl(N^{+-}(x)\cup N^{-+}(x)\bigr)\setminus U$.
	For all $w\in N^{-,--}(v)\setminus U$, we have $v\in N^{+,++}(w)$ and $w$ is collected before $v$. By \eqref{C}, $w$ has contributed to $v$. By \eqref{NC}, $v$ has received at most $d^{+,++}(v)$ contributions. Thus:
	 	\begin{align}
	|N^{-,--}(v)\setminus U| \le d^{+,++}(v).\label{N}
	\end{align}
	Now we have:
	\begin{align*}%{split}
	|N^{2}[v]\setminus U| &\le|N^{+,++}(v) \cup (N^{-,--}(v)\setminus U) \cup N^{+-}(v)\cup N^{-+}(v) | \\
	&   \leq  d^{+,++}(v) + |N^{+}(v)\cup N^{++}(v)\cup N^{+-}(v)| +d^{-+}(v) &\text{(by \eqref{N})}\\
	& \leq (k^2 + k)+d^{+}(v)\Delta+(\Delta-d^{+}(v))(k-1)\\ 
	%DY changed d^{+}(v) to k
	& = (k^2 + k)+d^{+}(v)(\Delta-k+1) + \Delta(k-1) \\ 
	%d^{+}(v)+1)-\Delta \\
	& \le (k^2 + k)+k(\Delta-k+1) + \Delta(k-1)  &\text{(by \eqref{D})} \\ 
	%& \leq (k^2 + k)+2k\Delta-k^{2}+k-\Delta &\text{(by \eqref{D})}\\ 
	&=(2k-1)\Delta+2k.
	\end{align*}%{split}
\end{proof}

\noindent {\bf Acknowledgement:} We thank two anonymous referees for their  suggestions and comments that helped improve the presentation of the paper.

%%%%%%%%%%%%%%%%%%%%%%%%%%%%%%%%%%%%%%%%%%%%%%%%%%%%%%%%%%

\end{document}